\newtheorem{theorem}{\bfseries Theorem}[section]
\newtheorem{lemma}[theorem]{\bfseries Lemma}
\theoremstyle{definition}
\newtheorem{definition}[theorem]{\bfseries Definition}
\newtheorem{example}[theorem]{\bfseries Example}
\tikzstyle{decision} = [diamond, draw, fill=blue!20,
\tikzstyle{block} = [rectangle, draw, fill=blue!20,
\tikzstyle{line} = [draw, thick, color=black!50, -latex']
\tikzstyle{cloud} = [draw, ellipse,fill=red!20, node distance=2.5cm,
\newcommand\N{\mathbb N}
\newcommand\unbar{\mathrm{unbar}}
\newcommand\removebar{\mathrm{removebar}}
\newcommand\red{\text{red}}
\newcommand\Av{\text{Av}}
\renewcommand{\geq}{\geqslant} \renewcommand{\leq}{\leqslant}  \renewcommand{\le}{\leqslant}
\newcommand\containedin{\le}
\newcommand\subperm{<_{\text{subperm}}}
\begin{document}
 \normalem

\title[$k$-pop stack sortable permutations and $2$-avoidance]{$k$-pop stack sortable permutations\\ and $2$-avoidance}

\author{Murray Elder, Yoong Kuan Goh}
\address{School of Mathematical and Physical Sciences, University of Technology Sydney, Ultimo NSW 2007, Australia}
\email{murrayelder,gohyoongkuan@gmail.com}
\thanks{Research supported by Australian Research Council grant DP160100486. The second author was supported by the Australian Government Research Training Program.}

\date{\today}

\begin{abstract}

	We consider 
	permutations  sortable by $k$ passes through a deterministic pop stack. We show that for any 
	 $k\in\N$
	 the set is characterised by finitely many 
	 patterns, answering a question of Claesson and Gu\dh{}mundsson.

	  Our characterisation demands a more precise definition  than in previous literature of what it means for a permutation to  avoid a set of barred and unbarred patterns. We propose a new notion called {\em $2$-avoidance}. \end{abstract}

\keywords{Permutation pattern, $k$-pop stack, pattern avoidance, barred pattern, $2$-avoidance}

\maketitle

\section{Introduction}
A {\em pop stack}   is a sorting device which operates as follows: at each step it can either push one token from the input stream onto the top of the stack, or else  pop the {\em entire stack contents}  to the output stream. We consider the tokens to be distinct real numbers 
  with the usual ordering.
  A  {\em deterministic} pop stack always performs the push move unless the token 
 on the top of the stack is larger in value that the token to be pushed from the input, or if there is no further input.
See for example Figure~\ref{fig:deterministic-pop stack}.  For convenience from now on we assume tokens are integers and write a sequence $2,1,3$ as $213$ when tokens are single digits.

\begin{figure}[h!]
    \centering

		\begin{tikzpicture}[scale=.8, node distance = 2.1cm, auto]
		\begin{node} (root)
		{
			\begin{tikzpicture}[thick,scale=0.4, every node/.style={scale=0.6}]
			\draw[thick] (0,1.7) -- (0,-2.5) -- (1,-2.5) -- (1,1.7);
			\draw[ ->] (1.6,2.4) -- node[above] {} (0.66,2.4) -- (0.66,1.7);
			\draw (2.5,2.4) node {$213$};
			\draw[ <-] (-0.6,2.4) -- node[above] {} (0.33,2.4) -- (0.33,1.7);	
			\end{tikzpicture}
		};
		\end{node}
	
	\node[right of=root, node distance=2.1cm] (root_2)
	{
		\begin{tikzpicture}[thick,scale=0.4, every node/.style={scale=0.6}]
		\draw[thick] (0,1.7) -- (0,-2.5) -- (1,-2.5) -- (1,1.7);
		\draw[ ->] (1.6,2.4) -- node[above] {} (0.66,2.4) -- (0.66,1.7);
		\draw (2.5,2.4) node {$13$};
		\draw[ <-] (-0.6,2.4) -- node[above] {} (0.33,2.4) -- (0.33,1.7);	
		\draw (0.5,-1.7) node {$2$};
		\end{tikzpicture}
	};

	\node[right of=root_2, node distance=2.1cm] (root_3)
	{
		\begin{tikzpicture}[thick,scale=0.4, every node/.style={scale=0.6}]
		\draw[thick] (0,1.7) -- (0,-2.5) -- (1,-2.5) -- (1,1.7);
		\draw[ ->] (1.6,2.4) -- node[above] {} (0.66,2.4) -- (0.66,1.7);
		\draw (2.1,2.4) node {$3$};
		\draw[ <-] (-0.6,2.4) -- node[above] {} (0.33,2.4) -- (0.33,1.7);	
		\draw (0.5,-1.7) node {$2$};
		\draw (0.5,-1.2) node {$1$};
		\end{tikzpicture}
	};

	\node[right of=root_3, node distance=2.1cm] (root_4)
	{
		\begin{tikzpicture}[thick,scale=0.4, every node/.style={scale=0.6}]
		\draw[thick] (0,1.7) -- (0,-2.5) -- (1,-2.5) -- (1,1.7);
		\draw[ ->] (1.6,2.4) -- node[above] {} (0.66,2.4) -- (0.66,1.7);
		\draw (2.0,2.4) node {};
		\draw[ <-] (-0.6,2.4) -- node[above] {} (0.33,2.4) -- (0.33,1.7);	
		\draw (2.1,2.4) node {$3$};
		\draw (-1.2,2.4) node {};
				\draw (-1.3,2.4) node {$12$};
		\end{tikzpicture}
	};

	\node[right of=root_4, node distance=2.1cm] (root_5)
	{
		\begin{tikzpicture}[thick,scale=0.4, every node/.style={scale=0.6}]
		\draw[thick] (0,1.7) -- (0,-2.5) -- (1,-2.5) -- (1,1.7);
		\draw[ ->] (1.6,2.4) -- node[above] {} (0.66,2.4) -- (0.66,1.7);
		\draw[ <-] (-0.6,2.4) -- node[above] {} (0.33,2.4) -- (0.33,1.7);	
		\draw (0.5,-1.2) node {$3$};
		\draw (-1.3,2.4) node {$12$};
		\end{tikzpicture}
	};

	\node[right of=root_5, node distance=2.1cm] (root_6)
	{
		\begin{tikzpicture}[thick,scale=0.4, every node/.style={scale=0.6}]
		\draw[thick] (0,1.7) -- (0,-2.5) -- (1,-2.5) -- (1,1.7);
		\draw[ ->] (1.6,2.4) -- node[above] {} (0.66,2.4) -- (0.66,1.7);
		\draw[ <-] (-0.6,2.4) -- node[above] {} (0.33,2.4) -- (0.33,1.7);	
		\draw (-1.3,2.4) node {$123$};
		\end{tikzpicture}
	};
	
	\path [line] (root) -- (root_2);
	\path [line] (root_2) -- (root_3);
	\path [line] (root_3) -- (root_4);
	\path [line] (root_4) -- (root_5);
	\path [line] (root_5) -- (root_6);

	 \end{tikzpicture}

\caption{Sorting $213$ using a deterministic pop stack}
\label{fig:deterministic-pop stack}
\end{figure}
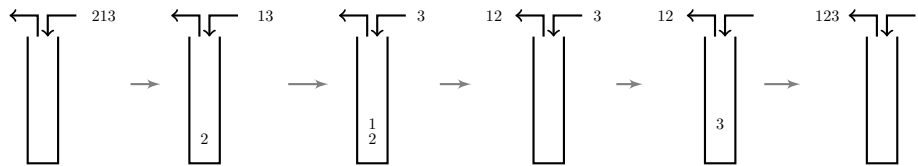

Observe that by definition the stack remains ordered from smallest on top to largest on the bottom during the operation of a deterministic pop stack. 

A permutation (ordered sequence of distinct real numbers) can be sorted by $k$ passes through a deterministic pop stack if after repeating the procedure $k$ times, the sequence is ordered from smallest to largest. For example, 
$41352$ can be sorted by two passes  (\cref{fig:2-pass pop stack}). 

\begin{figure}[ht!]

   \begin{subfigure}{1\textwidth}
       \centering
   \begin{tikzpicture}[scale=1, node distance = 2cm, auto]
		\begin{node} (root)
		{
			\begin{tikzpicture}[thick,scale=0.4, every node/.style={scale=0.6}]
			\draw[thick] (0,1.7) -- (0,-2.5) -- (1,-2.5) -- (1,1.7);
			\draw[ ->] (1.6,2.4) -- node[above] {} (0.66,2.4) -- (0.66,1.7);
			\draw (2.5,2.4) node {$41352$};
			\draw[ <-] (-0.6,2.4) -- node[above] {} (0.33,2.4) -- (0.33,1.7);	
			\end{tikzpicture}
		};
		\end{node}

	\node[right of=root, node distance=2.3cm] (root_3)
	{
		\begin{tikzpicture}[thick,scale=0.4, every node/.style={scale=0.6}]
		\draw[thick] (0,1.7) -- (0,-2.5) -- (1,-2.5) -- (1,1.7);
		\draw[ ->] (1.6,2.4) -- node[above] {} (0.66,2.4) -- (0.66,1.7);
		\draw (2.1,2.4) node {$352$};
		\draw[ <-] (-0.6,2.4) -- node[above] {} (0.33,2.4) -- (0.33,1.7);	
		\draw (0.5,-1.7) node {$4$};
		\draw (0.5,-1.2) node {$1$};
		\end{tikzpicture}
	};

	\node[right of=root_3, node distance=2.3cm] (root_4)
	{
		\begin{tikzpicture}[thick,scale=0.4, every node/.style={scale=0.6}]
		\draw[thick] (0,1.7) -- (0,-2.5) -- (1,-2.5) -- (1,1.7);
		\draw[ ->] (1.6,2.4) -- node[above] {} (0.66,2.4) -- (0.66,1.7);
		\draw (2.0,2.4) node {$52$};
		\draw[ <-] (-0.6,2.4) -- node[above] {} (0.33,2.4) -- (0.33,1.7);	
		\draw (0.5,-1.7) node {$3$};
		\draw (-1.2,2.4) node {$14$};
		\end{tikzpicture}
	};

	\node[right of=root_4, node distance=2.3cm] (root_5)
	{
		\begin{tikzpicture}[thick,scale=0.4, every node/.style={scale=0.6}]
		\draw[thick] (0,1.7) -- (0,-2.5) -- (1,-2.5) -- (1,1.7);
		\draw[ ->] (1.6,2.4) -- node[above] {} (0.66,2.4) -- (0.66,1.7);
		\draw[ <-] (-0.6,2.4) -- node[above] {} (0.33,2.4) -- (0.33,1.7);	
		\draw (0.5,-1.7) node {$5$};
		\draw (0.5,-1.2) node {$2$};
		\draw (-1.3,2.4) node {$143$};
		\end{tikzpicture}
	};

	\node[right of=root_5, node distance=2.3cm] (root_6)
	{
		\begin{tikzpicture}[thick,scale=0.4, every node/.style={scale=0.6}]
		\draw[thick] (0,1.7) -- (0,-2.5) -- (1,-2.5) -- (1,1.7);
		\draw[ ->] (1.6,2.4) -- node[above] {} (0.66,2.4) -- (0.66,1.7);
		\draw[ <-] (-0.6,2.4) -- node[above] {} (0.33,2.4) -- (0.33,1.7);	
		\draw (-1.8,2.4) node {$14325$};
		\end{tikzpicture}
	};
	
	\path [line] (root) -- (root_3);
	\path [line] (root_3) -- (root_4);
	\path [line] (root_4) -- (root_5);
	\path [line] (root_5) -- (root_6);

\end{tikzpicture}
    \caption{First pass.}
 \end{subfigure}

  \bigskip
  \begin{subfigure}{1\textwidth}
    \centering
\begin{tikzpicture}[scale=2, node distance = 2cm, auto]
\begin{node} (root)
{
	\begin{tikzpicture}[thick,scale=0.4, every node/.style={scale=0.6}]
	\draw[thick] (0,1.7) -- (0,-2.5) -- (1,-2.5) -- (1,1.7);
	\draw[ ->] (1.6,2.4) -- node[above] {} (0.66,2.4) -- (0.66,1.7);
	\draw (2.5,2.4) node {$14325$};
	\draw[ <-] (-0.6,2.4) -- node[above] {} (0.33,2.4) -- (0.33,1.7);	
	\end{tikzpicture}
};
\end{node}

\node[right of=root, node distance=2.3cm] (root_2)
{
	\begin{tikzpicture}[thick,scale=0.4, every node/.style={scale=0.6}]
	\draw[thick] (0,1.7) -- (0,-2.5) -- (1,-2.5) -- (1,1.7);
	\draw[ ->] (1.6,2.4) -- node[above] {} (0.66,2.4) -- (0.66,1.7);
	\draw (2.4,2.4) node {$4325$};
	\draw[ <-] (-0.6,2.4) -- node[above] {} (0.33,2.4) -- (0.33,1.7);	
	\draw (0.5,-1.7) node {$1$};
	\end{tikzpicture}
};

\node[right of=root_2, node distance=2.3cm] (root_3)
{
	\begin{tikzpicture}[thick,scale=0.4, every node/.style={scale=0.6}]
	\draw[thick] (0,1.7) -- (0,-2.5) -- (1,-2.5) -- (1,1.7);
	\draw[ ->] (1.6,2.4) -- node[above] {} (0.66,2.4) -- (0.66,1.7);
	\draw (2.2,2.4) node {$325$};
	\draw[ <-] (-0.6,2.4) -- node[above] {} (0.33,2.4) -- (0.33,1.7);	
	\draw (0.5,-1.7) node {$4$};
	\draw (-1.0,2.4) node {$1$};
	\end{tikzpicture}
};

\node[right of=root_3, node distance=2.3cm] (root_4)
{
	\begin{tikzpicture}[thick,scale=0.4, every node/.style={scale=0.6}]
	\draw[thick] (0,1.7) -- (0,-2.5) -- (1,-2.5) -- (1,1.7);
	\draw[ ->] (1.6,2.4) -- node[above] {} (0.66,2.4) -- (0.66,1.7);
	\draw (1.8,2.4) node {$5$};
	\draw[ <-] (-0.6,2.4) -- node[above] {} (0.33,2.4) -- (0.33,1.7);	
	\draw (0.5,-1.7) node {$4$};
	\draw (0.5,-1.2) node {$3$};
	\draw (0.5,-0.7) node {$2$};
	\draw (-1.2,2.4) node {$1$};
	\end{tikzpicture}
};

\node[right of=root_4, node distance=2.3cm] (root_5)
{
	\begin{tikzpicture}[thick,scale=0.4, every node/.style={scale=0.6}]
	\draw[thick] (0,1.7) -- (0,-2.5) -- (1,-2.5) -- (1,1.7);
	\draw[ ->] (1.6,2.4) -- node[above] {} (0.66,2.4) -- (0.66,1.7);
	\draw[ <-] (-0.6,2.4) -- node[above] {} (0.33,2.4) -- (0.33,1.7);	
	\draw (0.5,-1.7) node {$5$};
	\draw (-1.4,2.4) node {$1234$};
	\end{tikzpicture}
};

\path [line] (root) -- (root_2);
\path [line] (root_2) -- (root_3);
\path [line] (root_3) -- (root_4);
\path [line] (root_4) -- (root_5);

\end{tikzpicture}

\caption{Second pass.}

\end{subfigure}

\caption{Sorting $41352$ with a 2-pass pop stack}
\label{fig:2-pass pop stack}

\end{figure}
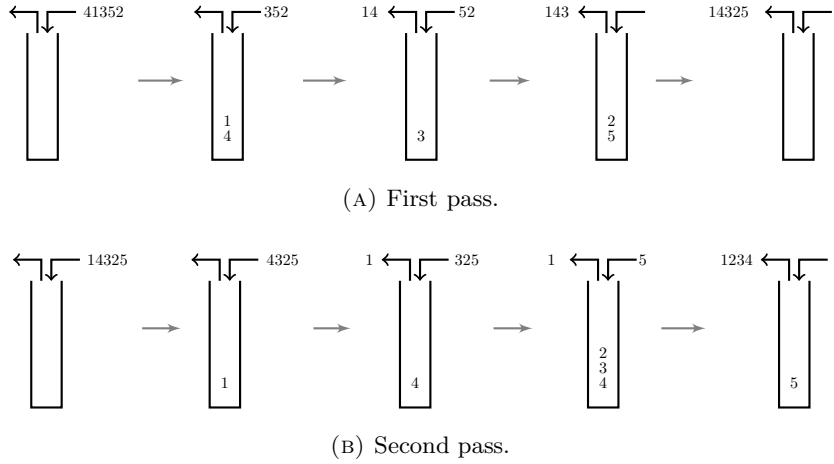
    
Let $p_1(\alpha)$ denote  the sequence obtained by passing a sequence $\alpha$ through a deterministic pop stack once, and define
$p_k(\alpha)=p_{k-1}(p_1(\alpha))$.
For example $p_1(41352)=14325$ and 
$p_2(41352)=12345$.
We say $\alpha$ is {\em $k$-pass deterministic pop stack sortable}, or {\em $k$-pop stack sortable} for short, if  $p_k(\alpha)$ is an increasing sequence.

In 1981 Avis and Newborn  characterised permutations sorted by  a single pass through a pop stack \cite{MR624050}, initiating the study of pop stack sorting. 
Specifically they showed a permutation can be sorted in one pass if and only if it avoids $231$ and $312$ in the usual sense of pattern avoidance (see \cref{sec:prelim} for precise definitions).
Pudwell and Smith characterised permutations sorted by  2 passes, in terms of avoiding a set of six usual patterns and two special {\em barred} patterns (defined in Subsection~\ref{subsec:PB} 
below), and computed a rational generating function for the number of  such permutations \cite{RebeccaTwoPass}.
  Claesson and Gu\dh{}mundsson then computed a rational generating function for permutations sorted by any finite number of passes \cite{MR3940979}, and  
  asked whether a ``useful permutation pattern characterization of the $k$-pop stack-sortable permutations'' exists for $k\geq 3$.

In this paper we provide such a characterisation. To do so, we 
  realised the current notions of what it means to avoid a set of barred and unbarred patterns would not suffice (see Subsection~\ref{subsec:PB}). We therefore 
 introduce a new notion  called {\em $2$-avoidance}, which could be of independent interest. 

We make the following observations. First, our present result is in contrast to the usual (nondeterministic) stacks-in-series model where in many cases  no finite pattern-avoidance characterisation is possible due to the existence of infinite antichains  \cite{MR3788054,MurphyPhD}.
Second, the operation of a pop stack is related to classical sorting:   ``bubble-sort'' is exactly sorting by arbitrarily  many passes through a pop stack of depth $2$.  
Third, 
pop stacks are a natural model for genome rearrangement \cite{RebeccaTwoPass}.

\section{Preliminaries}\label{sec:prelim}

Define a
 {\em permutation} to be a sequence of distinct real numbers, written as $\alpha=a_1a_2\dots a_r$ (we continue the convention to write sequences without commas). 
The {\em reduced form} of a permutation $\alpha$, denoted $\red(\alpha)$, is the permutation obtained by replacing the $i$th smallest entry of $\alpha$ by the integer $i$. A permutation in reduced form is called {\em reduced}. We denote the set of all reduced   permutations by $S^\infty$.
Two permutations  $\alpha=a_1\dots a_r$ and $\beta=b_1\dots b_s$
 are {\em order-isomorphic}, denoted $\alpha\sim\beta$, if they have the same reduced form.
 For example $253$ and $132$ are order-isomorphic. 
 In general, we will write permutations in their reduced form.

A {\em subpermutation} of $\alpha=a_1\dots a_r$ is a sequence $a_{i_1}\dots a_{i_s}$
where $1\leq i_1<\dots <i_s\leq r$, 
while  a   {\em factor}  is a sequence  $a_{i_1}\dots a_{i_s}$ where $i_{j+1}=i_j+1$.
If $\alpha,\beta$ are two permutations, we say $\beta$ {\em contains} $\alpha$ if some subpermutation of $\beta$ is order-isomorphic to $\alpha$. We use the notation $\alpha\containedin \beta$ if  $\beta$ {contains} $\alpha$, and the notation $\alpha \subperm \beta$ if $\alpha$ is a subpermutation of $\beta$.
We say
$\beta$ {\em avoids} $\alpha$ if no  subpermutation of $\beta$ is order-isomorphic to $\alpha$. For example $54123$ contains $312$ and  avoids  $231$.  For any set of  permutations $F$, let $\Av(F)\subseteq S^\infty$ denote the  set of all permutations that simultaneously avoid every $\alpha\in F$.

Knuth famously observed that a permutation can be sorted by passing it through a single infinite stack if and only if it avoids $231$ \cite{Knuth}.
Avis and Newborn proved a permutation can be sorted by passing it through an infinite pop stack once if and only if it avoids both $231$ and $312$. However, for multiple passes through a pop stack, the situation arises that some permutation cannot be sorted, while a longer permutation containing it can, so the usual notion of  pattern avoidance is not useful for characterising permutations in this context (in other words, for $k\geq 2$, $k$-pass pop stack sortable permutations are not a closed class with respect to usual pattern avoidance). As a concrete example,  Figure~\ref{fig:2-pass pop stack:3214} shows that $3241$ is not $2$-pass pop stack sortable, whereas $41352$ is (as demonstrated by Figure~\ref{fig:2-pass pop stack}), and contains $3241$.

\begin{figure}[h!]
	 \begin{subfigure}[t]{0.8\textwidth}
    	\centering

		\begin{tikzpicture}[scale=1, node distance = 2cm, auto]
		\begin{node} (root)
		{
			\begin{tikzpicture}[thick,scale=0.4, every node/.style={scale=0.6}]
			\draw[thick] (0,1.7) -- (0,-2.5) -- (1,-2.5) -- (1,1.7);
			\draw[ ->] (1.6,2.4) -- node[above] {} (0.66,2.4) -- (0.66,1.7);
			\draw (2.5,2.4) node {$3241$};
			\draw[ <-] (-0.6,2.4) -- node[above] {} (0.33,2.4) -- (0.33,1.7);	
			\end{tikzpicture}
		};
	\end{node}
	
	\node[right of=root, node distance=2.3cm] (root_2)
	{
		\begin{tikzpicture}[thick,scale=0.4, every node/.style={scale=0.6}]
		\draw[thick] (0,1.7) -- (0,-2.5) -- (1,-2.5) -- (1,1.7);
		\draw[ ->] (1.6,2.4) -- node[above] {} (0.66,2.4) -- (0.66,1.7);
		\draw (2.5,2.4) node {$41$};
		\draw[ <-] (-0.6,2.4) -- node[above] {} (0.33,2.4) -- (0.33,1.7);	
		\draw (0.5,-1.7) node {$3$};
		\draw (0.5,-1.2) node {$2$};
		\end{tikzpicture}
	};
	
	\node[right of=root_2, node distance=2.3cm] (root_3)
	{
		\begin{tikzpicture}[thick,scale=0.4, every node/.style={scale=0.6}]
		\draw[thick] (0,1.7) -- (0,-2.5) -- (1,-2.5) -- (1,1.7);
		\draw[ ->] (1.6,2.4) -- node[above] {} (0.66,2.4) -- (0.66,1.7);
		\draw (2.1,2.4) node {$41$};
		\draw[ <-] (-0.6,2.4) -- node[above] {} (0.33,2.4) -- (0.33,1.7);	
		\draw (-1.0,2.4) node {$23$};
		\end{tikzpicture}
	};
	
	\node[right of=root_3, node distance=2.3cm] (root_4)
	{
		\begin{tikzpicture}[thick,scale=0.4, every node/.style={scale=0.6}]
		\draw[thick] (0,1.7) -- (0,-2.5) -- (1,-2.5) -- (1,1.7);
		\draw[ ->] (1.6,2.4) -- node[above] {} (0.66,2.4) -- (0.66,1.7);
		\draw[ <-] (-0.6,2.4) -- node[above] {} (0.33,2.4) -- (0.33,1.7);	
		\draw (0.5,-1.7) node {$4$};
		\draw (0.5,-1.2) node {$1$};
		\draw (-1.2,2.4) node {$23$};
		\end{tikzpicture}
	};

	\node[right of=root_4, node distance=2.3cm] (root_5)
	{
		\begin{tikzpicture}[thick,scale=0.4, every node/.style={scale=0.6}]
		\draw[thick] (0,1.7) -- (0,-2.5) -- (1,-2.5) -- (1,1.7);
		\draw[ ->] (1.6,2.4) -- node[above] {} (0.66,2.4) -- (0.66,1.7);
		\draw[ <-] (-0.6,2.4) -- node[above] {} (0.33,2.4) -- (0.33,1.7);	
		\draw (-1.8,2.4) node {$2314$};
		\end{tikzpicture}
	};
	
	\path [line] (root) -- (root_2);
	\path [line] (root_2) -- (root_3);
	\path [line] (root_3) -- (root_4);
	\path [line] (root_4) -- (root_5);

\end{tikzpicture}
  \caption{First pass.}
  \end{subfigure}

  \bigskip
  \begin{subfigure}[t]{0.8\textwidth}
	\centering

		\begin{tikzpicture}[scale=1, node distance = 2cm, auto]
\begin{node} (root)
{
	\begin{tikzpicture}[thick,scale=0.4, every node/.style={scale=0.6}]
	\draw[thick] (0,1.7) -- (0,-2.5) -- (1,-2.5) -- (1,1.7);
	\draw[ ->] (1.6,2.4) -- node[above] {} (0.66,2.4) -- (0.66,1.7);
	\draw (2.5,2.4) node {$2314$};
	\draw[ <-] (-0.6,2.4) -- node[above] {} (0.33,2.4) -- (0.33,1.7);	
	\end{tikzpicture}
};
\end{node}

\node[right of=root, node distance=2.3cm] (root_2)
{
\begin{tikzpicture}[thick,scale=0.4, every node/.style={scale=0.6}]
\draw[thick] (0,1.7) -- (0,-2.5) -- (1,-2.5) -- (1,1.7);
\draw[ ->] (1.6,2.4) -- node[above] {} (0.66,2.4) -- (0.66,1.7);
\draw (2.5,2.4) node {$314$};
\draw[ <-] (-0.6,2.4) -- node[above] {} (0.33,2.4) -- (0.33,1.7);	
\draw (0.5,-1.7) node {$2$};
\end{tikzpicture}
};

\node[right of=root_2, node distance=2.3cm] (root_3)
{
\begin{tikzpicture}[thick,scale=0.4, every node/.style={scale=0.6}]
\draw[thick] (0,1.7) -- (0,-2.5) -- (1,-2.5) -- (1,1.7);
\draw[ ->] (1.6,2.4) -- node[above] {} (0.66,2.4) -- (0.66,1.7);
\draw (2.1,2.4) node {$4$};
\draw[ <-] (-0.6,2.4) -- node[above] {} (0.33,2.4) -- (0.33,1.7);	
\draw (0.5,-1.7) node {$3$};
\draw (0.5,-1.2) node {$1$};
\draw (-1.0,2.4) node {$2$};
\end{tikzpicture}
};

\node[right of=root_3, node distance=2.3cm] (root_4)
{
\begin{tikzpicture}[thick,scale=0.4, every node/.style={scale=0.6}]
\draw[thick] (0,1.7) -- (0,-2.5) -- (1,-2.5) -- (1,1.7);
\draw[ ->] (1.6,2.4) -- node[above] {} (0.66,2.4) -- (0.66,1.7);
\draw[ <-] (-0.6,2.4) -- node[above] {} (0.33,2.4) -- (0.33,1.7);	
\draw (0.5,-1.7) node {$4$};
\draw (-1.2,2.4) node {$213$};
\end{tikzpicture}
};

\node[right of=root_4, node distance=2.3cm] (root_5)
{
\begin{tikzpicture}[thick,scale=0.4, every node/.style={scale=0.6}]
\draw[thick] (0,1.7) -- (0,-2.5) -- (1,-2.5) -- (1,1.7);
\draw[ ->] (1.6,2.4) -- node[above] {} (0.66,2.4) -- (0.66,1.7);
\draw[ <-] (-0.6,2.4) -- node[above] {} (0.33,2.4) -- (0.33,1.7);	
\draw (-1.8,2.4) node {$2134$};
\end{tikzpicture}
};

\path [line] (root) -- (root_2);
\path [line] (root_2) -- (root_3);
\path [line] (root_3) -- (root_4);
\path [line] (root_4) -- (root_5);

\end{tikzpicture}

    \caption{Second pass.}
  \end{subfigure}

\caption{$3241$ is not a 2-pass pop stack sortable}
\label{fig:2-pass pop stack:3214}
\end{figure}
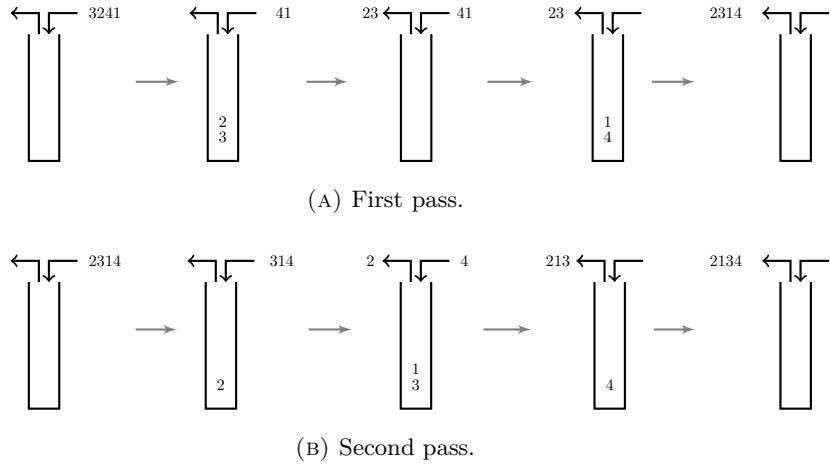

This leads us to the following notion.

\begin{definition}[$2$-containment]
	\label{2containment}
Let $\sigma$  be a permutation and $F, G\subseteq S^\infty$ be two sets of reduced permutations.
We say that $\sigma$ {\em $2$-contains} $(F,G)$ if there exists a subpermutation $\gamma$ of $\sigma$ such that 
\begin{itemize}\item[--] $\red(\gamma)\in F$ {and} 
\item[--] there is no 
$\delta\subperm \sigma$  such that 
 $\gamma\subperm \delta$ and $\red(\delta)\in G$.
\end{itemize}
\end{definition}
Informally we think of the set $G$ as patterns which can potentially {\em save} a permutation from being forbidden by $F$.

A permutation {\em $2$-avoids} $(F,G)$ if it does not $2$-contain $(F,G)$. Using propositional logic we can express this as follows.
\begin{definition}[$2$-avoidance]
	\label{2avoidance}
Let $\sigma$  be a permutation and $F, G\subseteq S^\infty$ be two sets of permutations.
We say that $\sigma$ {\em $2$-avoids} $(F,G)$ if for all  subpermutations $\gamma$ of $\sigma$, if $\red(\gamma)\in F$ then there exists 
$\delta\subperm \sigma$  such that 
 $\gamma\subperm \delta$ and $\red(\delta)\in G$.
\end{definition}
We denote the set of all permutations in $S^\infty$ which $2$-avoid 
$(F,G)$ by $\Av_2(F,G)$.

\begin{example}\label{eg:egSimple}
Let
$F=\{ 3241 \}, G=\{ 4{1}352 \}$. Then  
 $\sigma_1=143562$ has  the subpermutation $4352~\sim  3241$ which is not part of a longer subpermutation of $\sigma_1$  order-isomorphic to  $41352$, so $\sigma_1$ $2$-contains $(F,G)$. 
Now consider $\sigma_2=152463$ which 
 has subpermutation $5463\sim 3241$, however $5463$ is a subpermutation of $52463\sim 41352$,  so $5462$  is {\em saved} by $G$. Since there are no other subpermutations of $\sigma_2$ that are order-isomorphic to $3241$, we have that $\sigma_2$ $2$-avoids $(F,G)$.
\end{example}

\begin{example}\label{eg:egFactorial}
Let
$F=\{ 1 \}, G=\{ 12,21 \}$. Then $\Av_2(F,G)$ consists of {\em all} permutations in $S^\infty$ except the permutation of length 1. This is a rather extreme example, but shows that the growth of 
$2$-avoidance sets can be factorial.\end{example}

Using this notion, we can express the result of Pudwell and Smith as follows.

\begin{theorem}[Pudwell and Smith \cite{RebeccaTwoPass}]\label{thm:2pop}
The set of $2$-pass pop stack sortable permutations is equal to  \[\Av_2(\{ 2341, 3412, 3421 ,4123, 4231, 4312, 3241,  4132 \}, \{4{1}352,  413{5}2 \})\] 

\end{theorem}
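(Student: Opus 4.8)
The plan is to reduce the two-pass condition to a single-pass condition and then read off the patterns. First I would record the structural description of one pass: tracing the pop-stack dynamics shows that $p_1$ acts on a permutation by reversing each maximal strictly decreasing factor (run), since the stack accumulates a decreasing run and dumps it in increasing order exactly when a larger entry arrives. Consequently $p_1(\sigma)$ is a concatenation of increasing runs whose descents sit precisely at the tops of the original decreasing runs. Combining $p_k(\alpha)=p_{k-1}(p_1(\alpha))$ with the Avis--Newborn theorem \cite{MR624050}, a permutation $\sigma$ is $2$-pop stack sortable if and only if $p_1(\sigma)\in\Av(\{231,312\})$, equivalently $p_1(\sigma)$ is a direct sum of decreasing permutations.

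The task therefore becomes to describe, in terms of $\sigma$ itself, exactly when $p_1(\sigma)$ contains $231$ or $312$. Because $p_1$ only permutes entries within runs and leaves the relative order of entries in distinct runs intact, the image of any three entries of $\sigma$ is determined by which runs they occupy, and run membership is pinned down by at most one neighbouring entry on each side (an entry witnessing the ascent that closes a run). Hence the question is governed by a bounded window in $\sigma$, and I would carry out a finite analysis of the length-$4$ and length-$5$ patterns. The outcome, matching Pudwell and Smith \cite{RebeccaTwoPass}, is that the six length-$4$ patterns $2341,3412,3421,4123,4231,4312$ force a $231$ or $312$ in $p_1(\sigma)$ in every run configuration, whereas the two patterns $3241$ and $4132$ are forbidden only in certain configurations.

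The heart of the argument is this context-dependent case. An occurrence of $3241$ or $4132$ creates a forbidden image only when its entries fall into a certain run configuration of $\sigma$, and inserting a further entry that breaks the offending run removes the obstruction. I would show that the precise escape is an extension of the occurrence to a full $41352$: the extra ``$1$'' (for $3241$) or ``$5$'' (for $4132$) is exactly the entry that alters the decreasing-run boundary so that the offending triple is no longer mapped to $231$ or $312$. Thus an occurrence of $3241$ or $4132$ is harmless if and only if it extends within $\sigma$ to an occurrence of $41352$, which is precisely the clause of $2$-avoidance with $G=\{41352\}$. Since none of the six unconditional patterns is itself contained in $41352$, their occurrences can never be saved, so the $2$-avoidance requirement collapses to ordinary avoidance for them; this yields $\Av_2(F,G)$ with $F$ and $G$ as stated.

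Alternatively, and more quickly, one may take Pudwell and Smith's original barred-pattern characterisation as given and prove that it describes the same set as $\Av_2(F,G)$. Here the key check is that $41352$ contains a unique subpermutation order-isomorphic to $3241$ (delete the ``$1$'') and a unique one order-isomorphic to $4132$ (delete the ``$5$''); this uniqueness makes the single existential $\delta$ in the definition of $2$-avoidance land on exactly the extension prescribed by the bar, so that the $2$-avoidance clauses for $3241$ and $4132$ coincide with the barred patterns $4\bar{1}352$ and $413\bar{5}2$ respectively. I expect the main obstacle to be precisely this bookkeeping: confirming that bundling every pattern of $F$ against a single $G$ reproduces the conjunction of the separate barred conditions without spurious cross-savings, which is exactly the delicacy that motivates the new notion of $2$-avoidance and must be verified directly rather than quoted from the existing barred-pattern literature.
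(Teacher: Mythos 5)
You should first note a quirk of the comparison: the paper does not prove \cref{thm:2pop} at all. It is imported from Pudwell and Smith \cite{RebeccaTwoPass} and recast, without further justification, in the language of $2$-avoidance. Your ``alternative, quicker'' second route is therefore the closest match to what the paper actually does, and it supplies precisely the verification the paper leaves implicit. Its two key checks are correct: deleting one entry of $41352$ gives the patterns $1342,\,3241,\,3142,\,4132,\,3124$, so none of $2341, 3412, 3421, 4123, 4231, 4312$ embeds in $41352$ (hence for these six the $2$-avoidance clause collapses to ordinary avoidance and no cross-saving is possible), while $3241$ and $4132$ each embed exactly once (as $4352$ and $4132$ respectively), so the existential $\delta$ of \cref{2avoidance} is forced onto exactly the extension prescribed by the bars in $4\bar{1}352$ and $413\bar{5}2$. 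Your worry about ``spurious cross-savings'' is exactly the delicacy the paper's Subsection~\ref{subsec:PB} raises as motivation for $2$-avoidance, and your uniqueness argument resolves it.

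Your first route is genuinely different from the paper: a self-contained re-derivation via the run-reversal description of $p_1$ and Avis--Newborn \cite{MR624050}. Its pivotal claims are true; for instance, for an occurrence $(c,b,d,a)$ of $3241$ (values $a<b<c<d$), the only run configuration of $\sigma$ not immediately producing a $231$ in $p_1(\sigma)$ is ``$c,b$ in different runs and $d,a$ in the same run,'' and if no entry smaller than $a$ lies between $c$ and $b$, then the ascent witnessing that run break always creates a $231$ or $312$ after one pass --- which is exactly the statement that the unique escape is an extension to $41352$. This style of argument (blocks, ascent witnesses, bounded windows) is the $k=2$ shadow of the machinery the paper builds for \cref{thm:main}, so it buys independence from \cite{RebeccaTwoPass} and a template that generalises. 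What it does not yet do, as written, is constitute a proof: the central finite case analysis in both directions (that the six patterns are always fatal, and that any $231$ or $312$ in $p_1(\sigma)$ traces back to a forbidden or unsaved configuration in $\sigma$) is asserted with its outcome quoted from Pudwell and Smith rather than executed. Be warned that the trace-back needs slightly more than ``one neighbouring entry on each side'': a run break is witnessed by a pair of adjacent entries, and in one sub-case a \emph{saved} occurrence of $4132$ forces a further extension before one of the six patterns appears. A useful economy for either route: $3241$ and $4132$ are reverse-complements of each other, $41352$ is fixed by reverse-complement, and $p_1$ commutes with this symmetry, so only one of the two conditional patterns needs to be analysed.
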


At this point the reader might object and say that Pudwell and Smith's use of {\em barred patterns} (see below)
 is more efficient to describe this set, which it is, however for $3$-pop stack sortable permutations, the usual definition of avoiding sets of barred and unbarred patterns  will fail.

\subsection{The problem with barred  patterns 
}\label{subsec:PB}
A {\em barred pattern} is a permutation where certain entries (possibly none) are marked with a bar, for example $\beta= 4\bar{3}5\bar{1}2$, which  encodes two permutations, one called $\removebar(\beta)$ obtained by deleting entries marked with a bar, 
and  the other called $\unbar(\beta)$ obtained by removing bars. For example $\removebar(4\bar{3}5\bar{1}2)=452\sim 231$, and $\unbar(4\bar{3}5\bar{1}2)=4{3}5{1}2$.

 In   \cite[Definition 1.2.3]{Kitaev}
 a permutation $\sigma$ is said to avoid a barred pattern  $\beta$ if {each} occurrence of $\removebar(\beta)$ in $\sigma$ (if any) is a part of an occurrence of $\unbar(\beta)$ in $\sigma$. 
  There are two issues with this definition.
 Firstly, as written,  this does not agree with the usual pattern avoidance when $\beta$ itself has no bar tokens. For example, $\sigma=21$ obviously does not avoid itself in the usual sense of pattern avoidance, but if $\beta=21$ is considered as a barred pattern then $\sigma$ avoids $\beta$ since it has a subpermutation, $\kappa = 21=\removebar(\beta)$,  and $\kappa$ is part of an occurrence of $\unbar(\beta)=21$ in $\sigma$.

 Secondly and more seriously,  in applications such as \cite{ RebeccaTwoPass,WEST1993303} some set of permutations $S$ is characterised by being those permutations avoiding some list of barred and unbarred patterns, where, as we understand it, this  means that each permutation in $S$ must avoid every pattern individually. Explicitly, Tenner   \cite{Tenner} defines, for   $P$ a collection of barred and unbarred patterns,  $Av(P)$ to be the set of permutations simultaneously  avoiding all patterns in $P$:
 \[Av(P)=\bigcap_{p\in P}Av(p)\]
  
  However, this notion does not suffice to describe $3$-pop stack sortable permutations: one may verify that $32451$ is not $3$-pop stack sortable while both $4{6}3{1}572$ and $4{7}3{1}562$ are. 
  If we were to characterise 
  $3$-pop stack sortable permutations  as those avoiding a list containing 
$4\bar{6}3\bar{1}572$ and $ 4\bar{7}3\bar{1}562$, then we would be mistaken since $4731562$ does not avoid this list since it fails to avoid the barred pattern $4\bar{6}3\bar{1}572$. 
\Cref{2containment} says that even though some permutation may contain a subpermutation which is forbidden, it can be saved if it extends to another subpermutation which appears somewhere on the list $G$.
 For the applications in
 \cite{ RebeccaTwoPass, WEST1993303} 
  the sets of barred and unbarred patterns to be avoided have no ``overlap'': in the case of \cite{ RebeccaTwoPass}, the two barred patterns have $\removebar$ equal to $2341$ and $4312$ which are both different to the unbarred patterns in their list; and in the case of \cite{WEST1993303} there is only one barred pattern whose $\removebar$ is different to the unbarred pattern.

\subsection{Removing redundant patterns}\label{subsec:redundant}

In general, one cannot simply delete patterns from $F$  if they contain shorter elements of $F$, or delete patterns from $G$ if they are contained in longer elements of $G$, since $2$-containment involves a subtle interplay between the two sets. The following two examples demonstrate this.

\begin{example}\label{egA}
If 	$F=\{ 43251,3241 \}$ and
$G=\{4{1}352 \}$
then we claim that the pattern $43251\in F$ is not  redundant, even though it contains a shorter element in $F$.
Let $F_1=\{ 3241 \}$, then $\sigma=6251473$ $2$-avoids $(F_1,G)$, but $2$-contains $(F,G)$.  Thus 
$\Av_2(F,G)\neq \Av_2(F_1,G)$.\end{example}

\begin{example}
\label{egB}
If 	$F=\{4123, 4231, 43251,3241 \}$ and
$G=\{4{1}352 \}$, 
then we claim that the pattern $43251\in F$ {\em is} redundant.  Suppose $\sigma$ $2$-contains $(F,G)$. Either this is because of $\beta\in F\setminus\{43251\}$, or not. If we suppose not, then $\sigma$ must $2$-contain $(F,G)$ because of $43251$.
 If $\sigma$ contains $43251$ then it contains $3241$, so each of the subpermutations $\gamma \in\{ 4351,  4251 , 3251 \} $ must be saved by $4{1}352\in G$ 
	(otherwise we would say $\sigma$ $2$-contains $(F,G)$ because of some $\beta\in F\setminus\{43251\}$).
Thus, $\sigma$ must  have subpermutation $4a3b251$ such that $4a351 \sim 4a251 \sim 3b251 \sim (4{1}352)$, so $a,b<1$. 
So, $4a3b251$ is either order isomorphic to  $403(-1)251$ (so $\sigma$ contains $4231$) or $4(-1) 3 0 251$ (so $\sigma$ contains  $4123$). So, $\sigma$ $2$-contains $F$ because of $4123$  or $4231$ which contradicts that it is because of $43251$ only. Thus, $\sigma$  $2$-contains $(F \setminus \{ 43251 \},G)$. Conversely if  $\sigma$  $2$-contains $(F \setminus \{ 43251 \},G)$ then it clearly $2$-contains $(F,G)$.
Thus $\Av_2(F,G)=\Av_2(F \setminus \{ 43251 \},G)$.
\end{example}

However, we can state some general rules for removing redundant elements of $F$ or $G$.
\begin{restatable}{lemma}{lemA}
\label{lem:removeA}
If $\alpha\in G$ and for all $\gamma\in F$, $\gamma\not\containedin \alpha$, then $\Av_2(F,G)=\Av_2(F,G\setminus\{\alpha\})$.
\end{restatable}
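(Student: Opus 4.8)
The plan is to prove the asserted equality by establishing the two inclusions separately, writing $G'=G\setminus\{\alpha\}$ throughout. First I would dispose of the inclusion $\Av_2(F,G')\subseteq\Av_2(F,G)$, which holds \emph{without} any hypothesis on $\alpha$ and is simply monotonicity in the ``saving'' set: since $G'\subseteq G$, enlarging the set of potential savers can only make it easier to rescue a forbidden subpermutation. Concretely I argue by contrapositive. If $\sigma$ $2$-contains $(F,G)$, witnessed by some $\gamma\subperm\sigma$ with $\red(\gamma)\in F$ and no $\delta\subperm\sigma$ satisfying $\gamma\subperm\delta$ and $\red(\delta)\in G$, then a fortiori no such $\delta$ has $\red(\delta)\in G'\subseteq G$, so the \emph{same} $\gamma$ witnesses that $\sigma$ $2$-contains $(F,G')$. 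Contrapositively, $\Av_2(F,G')\subseteq\Av_2(F,G)$.

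The reverse inclusion $\Av_2(F,G)\subseteq\Av_2(F,G')$ is where the hypothesis is used, and I would again argue by contrapositive. Suppose $\sigma$ $2$-contains $(F,G')$, witnessed by $\gamma\subperm\sigma$ with $\red(\gamma)\in F$ that cannot be saved within $G'$. I claim the same $\gamma$ witnesses $2$-containment of $(F,G)$. If not, there is a $\delta\subperm\sigma$ with $\gamma\subperm\delta$ and $\red(\delta)\in G$. Since $\gamma$ is unsaved by $G'$, this $\delta$ cannot have $\red(\delta)\in G'$, and as $\alpha$ is the unique element removed we must have $\red(\delta)=\alpha$.

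The crux is then a single order-theoretic observation: because $\gamma$ is a subpermutation of $\delta$, its reduced form is contained in that of $\delta$, so $\red(\gamma)\containedin\red(\delta)=\alpha$. But $\red(\gamma)\in F$, so this exhibits a member of $F$ contained in $\alpha$, contradicting the hypothesis that $\gamma'\not\containedin\alpha$ for every $\gamma'\in F$. Hence no such $\delta$ exists, $\gamma$ is unsaved by $G$ as well, and $\sigma$ $2$-contains $(F,G)$; taking contrapositives gives the second inclusion, and combining the two yields the claimed equality.

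I do not expect a genuine obstacle here: the entire content is definitional bookkeeping around \cref{2containment}. The one point deserving care is getting the direction of monotonicity in $G$ correct (a larger saving set yields a larger avoidance set, not a smaller one), and the clean identification that $\alpha$ can never rescue a forbidden subpermutation precisely because it contains no member of $F$. That identification \emph{is} the hypothesis, and it is exactly what renders $\alpha$ redundant inside $G$.
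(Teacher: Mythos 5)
Your proof is correct and follows essentially the same route as the paper: the paper first isolates the monotonicity inclusion $\Av_2(F,G\setminus\{\alpha\})\subseteq\Av_2(F,G)$ as a separate lemma, then proves the reverse inclusion by noting that any saving $\delta$ for an unsaved witness $\gamma$ would force $\red(\delta)=\alpha$ and hence $\red(\gamma)\containedin\alpha$, contradicting the hypothesis. Your write-up just makes explicit the step ($\gamma\subperm\delta$ implies $\red(\gamma)\containedin\red(\delta)$) that the paper leaves implicit.
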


\begin{restatable}{lemma}{lemB}
\label{lem:removeB}
If $\alpha,\beta \in G$, $\beta\containedin \alpha$, $\beta\neq\alpha$,
and for all $\kappa\in F$, $\kappa\containedin\alpha$ implies $\kappa\containedin \beta$, then $\Av_2(F,G)=\Av_2(F,G\setminus\{\alpha\})$.
\end{restatable}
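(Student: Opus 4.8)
The plan is to prove the two inclusions separately, one of which is immediate. Write $G'=G\setminus\{\alpha\}$. Since $G'\subseteq G$, any witness $\delta\subperm\sigma$ with $\red(\delta)\in G'$ that saves an occurrence of an $F$-pattern is automatically a witness with $\red(\delta)\in G$; reading off \Cref{2avoidance}, this gives $\Av_2(F,G')\subseteq \Av_2(F,G)$ for free. All the content therefore lies in the reverse inclusion $\Av_2(F,G)\subseteq\Av_2(F,G')$, which says that dropping the allegedly redundant pattern $\alpha$ from $G$ cannot turn a $2$-avoider into a $2$-container. The intuition to be formalised is that $\alpha$ is superfluous \emph{as a saver}: any save that $\alpha$ performs ought to be reproducible by the smaller pattern $\beta$.

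For the reverse inclusion I would fix $\sigma\in\Av_2(F,G)$ and an arbitrary subpermutation $\gamma\subperm\sigma$ with $\kappa:=\red(\gamma)\in F$, and attempt to produce a $G'$-save for $\gamma$. Since $\sigma$ $2$-avoids $(F,G)$, there is some $\delta$ with $\gamma\subperm\delta\subperm\sigma$ and $\red(\delta)\in G$. If $\red(\delta)\neq\alpha$, then $\red(\delta)\in G'$ and $\delta$ is already the required $G'$-save, so this case is dispatched immediately. The only remaining case is $\red(\delta)=\alpha$, and the whole lemma reduces to re-saving $\gamma$ by producing some $\delta'\subperm\sigma$ with $\gamma\subperm\delta'$ and $\red(\delta')=\beta\in G'$.

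This last case is where I expect the real difficulty. The idea is to transport everything into $\alpha$ via the order-isomorphism $\delta\sim\alpha$: the occurrence $\gamma$ inside $\delta$ corresponds to a concrete occurrence of $\kappa$ among the positions of $\alpha$; since $\kappa\containedin\alpha$, the hypothesis yields $\kappa\containedin\beta$, while $\beta\containedin\alpha$ supplies an occurrence of $\beta$ inside $\alpha$. If I could choose an occurrence of $\beta$ in $\alpha$ that \emph{contains} this particular occurrence of $\kappa$, then pulling those positions back through $\delta\sim\alpha$ would give the desired $\delta'$ and finish the proof. The obstruction is that the hypothesis asserts only containment of patterns, i.e.\ the \emph{existence} of some occurrence of $\beta$ in $\alpha$ and of some occurrence of $\kappa$ in $\beta$; it does not by itself guarantee that the \emph{given} occurrence of $\kappa$ coming from $\gamma$ sits inside any occurrence of $\beta$, and in general a fixed occurrence of a pattern need not extend to a prescribed larger pattern. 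Closing this gap is the crux: I would try to choose the occurrence of $\beta$ in $\alpha$ compatibly with the positions of $\gamma$ (for instance taking it maximal subject to covering them), and then argue from the covering condition ``$\kappa\containedin\alpha$ implies $\kappa\containedin\beta$ for all $\kappa\in F$'', applied at the level of occurrences rather than of abstract containment, that every $F$-pattern occurrence forced inside the $\alpha$-copy can be absorbed into a $\beta$-copy. It is precisely here that the relationship between $F$, $\alpha$ and $\beta$ must be used in full, and I expect the careful bookkeeping of which positions of the $\alpha$-copy extend to a $\beta$-copy to be the main obstacle.
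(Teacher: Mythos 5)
Your reduction is the same as the paper's: the inclusion $\Av_2(F,G\setminus\{\alpha\})\subseteq \Av_2(F,G)$ is immediate, and everything rests on the case of an occurrence $\gamma\subperm\sigma$ with $\red(\gamma)\in F$ whose save $\delta'$ satisfies $\red(\delta')=\alpha$. But your proposal stops at the crux without resolving it, and in fact the obstruction you identify cannot be overcome, because the lemma as stated is false. Take $F=\{21\}$, $\alpha=2413$, $\beta=231$ and $G=\{\alpha,\beta\}$. The hypotheses hold: $\beta\containedin\alpha$ (via the subpermutation $241$), $\beta\neq\alpha$, and the only $\kappa\in F$ satisfies $21\containedin\alpha$ and $21\containedin\beta$. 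Now let $\sigma=2413$ itself. Every occurrence of $21$ in $\sigma$ is a subpermutation of $\delta=\sigma$, and $\red(\sigma)=\alpha\in G$, so $\sigma\in\Av_2(F,G)$. However the occurrence $43$ of the pattern $21$ (positions $2$ and $4$ of $\sigma$) extends to no occurrence of $\beta$: the only length-three subpermutations of $\sigma$ containing both of these entries are $243\sim 132$ and $413\sim 312$. So this occurrence has no save in $G\setminus\{\alpha\}=\{231\}$, hence $\sigma$ $2$-contains $(F,G\setminus\{\alpha\})$ and $\Av_2(F,G)\neq\Av_2(F,G\setminus\{\alpha\})$.

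What your write-up correctly isolates, and what the paper's own proof gets wrong, is exactly the difference between containment of patterns and extendability of a fixed occurrence. The paper's argument deduces $\red(\gamma)\containedin\beta$ from the hypothesis (legitimate) and then concludes that some $\delta$ with $\gamma\subperm\delta\subperm\delta'\subperm\sigma$ and $\red(\delta)=\beta$ must exist (illegitimate); that is precisely the step that fails for $\gamma=43$ inside $\delta'=2413$ above. So your observation that ``a fixed occurrence of a pattern need not extend to a prescribed larger pattern'' is not a bookkeeping nuisance to be engineered around by a clever choice of the copy of $\beta$; it is fatal, since no copy of $\beta$ covering $\gamma$ need exist at all. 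The lemma becomes true, and your transport argument through the order-isomorphism $\delta'\sim\alpha$ then completes verbatim, if one strengthens the hypothesis to the occurrence level: every occurrence of every $\kappa\in F$ inside $\alpha$ is contained in an occurrence of $\beta$ inside $\alpha$. (This does not damage \cref{thm:main}: there one may simply take $F_k=\Omega_1$ and $G_k=\Omega_2$ and skip the reduction step, or apply the reduction lemmas only in their corrected form.)
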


\begin{restatable}{lemma}{lemC}
\label{lem:removeC}
If $\kappa,\lambda\in F$, $\kappa\containedin \lambda$ 
and for all $\alpha\in G$, $\kappa\not\containedin \alpha$, then $\Av_2(F,G)=\Av_2(F\setminus\{\lambda\},G)$.
\end{restatable}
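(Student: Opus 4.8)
The plan is to prove the two inclusions $\Av_2(F,G)\subseteq\Av_2(F\setminus\{\lambda\},G)$ and $\Av_2(F\setminus\{\lambda\},G)\subseteq\Av_2(F,G)$ separately, working throughout with the complementary notion of $2$-containment (\cref{2containment}). Before starting I will note that the hypothesis is to be read with $\kappa\neq\lambda$ (as in \cref{lem:removeB}): if $\kappa=\lambda$ then $\kappa\notin F\setminus\{\lambda\}$ and the statement fails, for instance when $G=\emptyset$, so the useful content is that $\kappa$ is a strictly shorter forbidden pattern occurring inside $\lambda$.

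The forward inclusion is immediate and uses none of the hypotheses. Since $F\setminus\{\lambda\}\subseteq F$, any subpermutation $\gamma\subperm\sigma$ with $\red(\gamma)\in F\setminus\{\lambda\}$ also satisfies $\red(\gamma)\in F$; hence if $\sigma$ $2$-avoids $(F,G)$ the required saving $\delta\subperm\sigma$ exists by definition, showing that $\sigma$ $2$-avoids $(F\setminus\{\lambda\},G)$.

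For the reverse inclusion I will argue the contrapositive: assuming $\sigma$ $2$-contains $(F,G)$, I construct a witness for $2$-containment of $(F\setminus\{\lambda\},G)$. Fix $\gamma\subperm\sigma$ with $\red(\gamma)\in F$ admitting no saving. If $\red(\gamma)\neq\lambda$ then $\gamma$ itself is already a witness for $F\setminus\{\lambda\}$. Otherwise $\red(\gamma)=\lambda$, and here I use $\kappa\containedin\lambda$ to pass to a subpermutation $\gamma'\subperm\gamma\subperm\sigma$ with $\red(\gamma')=\kappa\in F\setminus\{\lambda\}$ (the positions realising $\kappa$ inside $\lambda$ pull back through the order-isomorphism $\gamma\sim\lambda$). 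The key claim is that $\gamma'$ admits no saving: if some $\delta'\subperm\sigma$ with $\gamma'\subperm\delta'$ had $\red(\delta')=\alpha\in G$, then $\gamma'\subperm\delta'$ together with $\red(\gamma')=\kappa$ would give $\kappa\containedin\alpha$, contradicting the hypothesis that $\kappa\not\containedin\alpha$ for all $\alpha\in G$. Thus $\gamma'$ witnesses that $\sigma$ $2$-contains $(F\setminus\{\lambda\},G)$, completing this direction.

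I do not expect a serious obstacle; the only care needed is in transferring the \emph{unsaveable} property from the $\lambda$-occurrence to the smaller $\kappa$-occurrence. In fact the argument shows something cleaner than the unsaveability of $\gamma$ itself: \emph{every} occurrence of $\kappa$ in $\sigma$ is automatically unsaveable, precisely because no element of $G$ contains $\kappa$, and the hypothesis $\kappa\containedin\lambda$ serves only to locate such an occurrence inside the given occurrence of $\lambda$. The routine points to check are transitivity of $\subperm$ and the invariance of $\containedin$ under order-isomorphism (so that $\kappa\containedin\delta'$ yields $\kappa\containedin\red(\delta')=\alpha$), neither of which presents any real difficulty.
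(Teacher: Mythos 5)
Your proposal is correct and follows essentially the same route as the paper: the easy inclusion is the monotonicity fact (the paper's \cref{lemFirst}, which you reprove inline), and the reverse inclusion is the same contrapositive argument, passing from an unsaveable occurrence of $\lambda$ to an occurrence of $\kappa$ inside it and noting that no occurrence of $\kappa$ can ever be saved since $\kappa\not\containedin\alpha$ for every $\alpha\in G$. Your observation that the hypothesis must be read with $\kappa\neq\lambda$ (made explicit in \cref{lem:removeB} but not here, and needed so that $\kappa\in F\setminus\{\lambda\}$) is a legitimate, if minor, sharpening that the paper glosses over.
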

Proofs can be found in \cref{appendix}.
Certainly we believe that further lemmas could be stated and proved to remove more redundant elements.
For the purpose of this paper we do not pursue this, we content ourselves to find a characterisation for $k$-pop stack sortable permutations in terms of finite sets only.

\section{Blocks}
Let $\sigma$ be a permutation. Call a factor $B_i=a_{i,1}a_{i,2}\dots a_{i,n_i}$ of  $\sigma$ a  {\em block} if $n_i>0$ and $a_{i,j}>a_{i,j+1}$ for all $1\leq j<n_i$. (Recall that factor means the entries are contiguous in $\sigma$.)
A {\em (maximal) block decomposition} of $\sigma$ is an expression of the form 
$\sigma=B_1B_2B_3\dots B_m$ where each $B_i$ is a block and for any two adjacent blocks $B_{i} = a_{i,1}a_{i,2}\dots a_{i,n_i}$ and $B_{i+1}=a_{i+1,1}a_{i+1,2}\dots a_{i+1,n_{i+1}}$ we have $a_{i,n_i} < a_{i+1,1}$.
For example   $\sigma={8}7{6}3452{1}$ has block decomposition $B_1= {8}7{6}3, B_2=4, B_3=52{1}$. For convenience we indicate the block decomposition of $\sigma$ by inserting $\mid$ symbols to separate blocks, so for our example we write ${8}7{6}3\mid4\mid52{1}$.  

If $B_i=a_{i,1}a_{i,2}\dots a_{i,n_i}$ is a block, let  $\widetilde{B_i}=a_{i,n_i}\dots a_{i,2}a_{i,1}$.
We have the following.
\begin{lemma}[\cite{RebeccaTwoPass}]
	If $\sigma$ has block decomposition $B_1B_2B_3\dots B_m$ then \[p_1(\sigma)= \widetilde{B_1}\widetilde{B_2}\widetilde{B_3}\dots \widetilde{B_m}.\]
\end{lemma}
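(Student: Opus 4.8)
The plan is to prove the identity by reading the tokens of $\sigma$ from left to right and tracking the contents of the stack, arguing by induction on the block index $i$ that the device processes each block $B_i$ in isolation and empties itself exactly at each block boundary. The key observation is that the two inequalities defining a maximal block decomposition play complementary roles: the within-block descents $a_{i,j}>a_{i,j+1}$ keep the tokens of a single block piled on the stack, while the between-block ascent $a_{i,n_i}<a_{i+1,1}$ is precisely what triggers the stack to discharge before any token of $B_{i+1}$ is read.

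First I would record the operational behaviour of the deterministic pop stack in the form needed. Reading tokens left to right, the device pushes the incoming token whenever the stack is empty or the incoming token is smaller than the current top, and it pops the \emph{entire} stack to the output precisely when the incoming token is larger than the current top, or when the input is exhausted. Since every push preserves the property that the stack is decreasing from bottom to top (smallest on top, as noted after the definition), the top of the stack is always the smallest token currently held, and a full pop discharges the stack top-to-bottom, hence in increasing order.

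The central claim, proved by induction on $i\in\{1,\dots,m\}$, is the invariant that immediately before the first token $a_{i,1}$ of $B_i$ is read, the stack is empty and the output produced so far equals $\widetilde{B_1}\widetilde{B_2}\dots\widetilde{B_{i-1}}$. The base case $i=1$ is immediate. For the inductive step, starting from an empty stack the device pushes $a_{i,1}$; then for each subsequent token $a_{i,j}$ the block inequality $a_{i,j}<a_{i,j-1}$ says the incoming token is smaller than the current top $a_{i,j-1}$, so it too is pushed, and after $B_i$ is read the stack holds $a_{i,1},\dots,a_{i,n_i}$ from bottom to top. Maximality enters only at the boundary: the condition $a_{i,n_i}<a_{i+1,1}$ means the next incoming token exceeds the current top, which triggers a full pop, outputting $a_{i,n_i}\dots a_{i,1}=\widetilde{B_i}$ and leaving the stack empty to read $a_{i+1,1}$; this re-establishes the invariant for $i+1$. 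When $i=m$ the input is exhausted after $B_m$ is read, forcing the final pop and appending $\widetilde{B_m}$, so the total output is $\widetilde{B_1}\dots\widetilde{B_m}$.

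The only genuinely delicate point, which I would state carefully rather than belabour, is that the stack never mixes tokens from two different blocks, as this is exactly what makes the per-block reversal clean. It rests on the two inequalities above: the descents keep a block together, and the single ascent at the boundary forces the discharge before the next block begins. That a pop empties the stack completely rather than partially is immediate from the definition, since the pop move discharges the whole stack at once, so I would simply note it.
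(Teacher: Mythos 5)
Your proof is correct. The paper itself gives no proof of this lemma (it is quoted from Pudwell and Smith \cite{RebeccaTwoPass}), so there is no argument to compare against; your induction on the block index, with the invariant that the stack is empty exactly at each block boundary --- the descents $a_{i,j}>a_{i,j+1}$ within a block forcing pushes, the ascent $a_{i,n_i}<a_{i+1,1}$ at the boundary (equivalently, exhaustion of the input at $i=m$) forcing the full pop, which discharges the block in reverse order as $\widetilde{B_i}$ --- is the standard and complete argument for this fact. One further point in your favour: you stated and used the push/pop rule in the form consistent with the paper's Figure~1 (pop exactly when the incoming token exceeds the stack top), whereas the paper's prose definition states the inequality the wrong way around; your reading is the one under which the lemma is true.
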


\noindent
For example $\sigma=987354621=9873\mid 54\mid 621$ so  $p_1(\sigma)=3789\, 45\, 126$. 

\begin{lemma}[\cite{MR3940979}]\label{lem3}
Let $\sigma$ be a permutation. Then each block in the block decomposition of $p_1(\sigma)$ contains at most 3 tokens.\end{lemma}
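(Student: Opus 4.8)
The plan is to exploit the formula $p_1(\sigma)=\widetilde{B_1}\widetilde{B_2}\cdots\widetilde{B_m}$ from the preceding lemma, where $B_1B_2\cdots B_m$ is the block decomposition of $\sigma$. Since each $\widetilde{B_i}=a_{i,n_i}\cdots a_{i,2}a_{i,1}$ is the reverse of a strictly decreasing block, each $\widetilde{B_i}$ is a strictly increasing factor of $p_1(\sigma)$. Consequently every descent of $p_1(\sigma)$ occurs at a boundary between two consecutive reversed blocks $\widetilde{B_i}$ and $\widetilde{B_{i+1}}$, and never inside one. This immediately constrains how a block (a maximal strictly decreasing factor) $R$ of $p_1(\sigma)$ can be formed.

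First I would record two elementary facts about such an $R$. A strictly decreasing factor cannot contain two tokens of the same $\widetilde{B_i}$, since those are increasing; hence $R$ takes at most one token from each reversed block and, being a factor, it occupies one token from a run of consecutive reversed blocks $\widetilde{B_i},\widetilde{B_{i+1}},\dots$. Moreover, if $R$ has length at least two then its first token must be the last token $a_{i,1}$ of some $\widetilde{B_i}$ (the maximum of $B_i$): otherwise the token immediately following it in $p_1(\sigma)$ lies in the same increasing block and is larger, forcing $R$ to have length one. Likewise, $R$ continues past its token drawn from $\widetilde{B_{i+1}}$ only if that token is simultaneously the first and last token of $\widetilde{B_{i+1}}$, i.e.\ only if $B_{i+1}$ is a singleton block $a_{i+1,1}$.

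The crux is a short contradiction assuming $R$ has length at least $4$. In that case $R$ consumes one token from each of $\widetilde{B_i},\widetilde{B_{i+1}},\widetilde{B_{i+2}},\widetilde{B_{i+3}}$, and the extension argument above forces both $B_{i+1}$ and $B_{i+2}$ to be singletons, so the second and third tokens of $R$ are $a_{i+1,1}$ and $a_{i+2,1}$. Since $R$ is strictly decreasing these satisfy $a_{i+1,1}>a_{i+2,1}$. On the other hand, maximality of the block decomposition of $\sigma$ demands the ascent $a_{i+1,n_{i+1}}<a_{i+2,1}$ between the adjacent blocks $B_{i+1}$ and $B_{i+2}$; as $B_{i+1}$ is a singleton this reads $a_{i+1,1}<a_{i+2,1}$, contradicting the previous inequality. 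Hence every block of $p_1(\sigma)$ has length at most $3$.

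I expect the main obstacle to be purely organisational: keeping straight the two distinguished entries of each block, namely the maximum $a_{i,1}$ (the last entry of $\widetilde{B_i}$) and the minimum $a_{i,n_i}$ (its first entry), and correctly matching the descent conditions inside $R$ against the ascent conditions enforced by maximality of the decomposition. Once this bookkeeping is in place the contradiction at length $4$ is immediate, and it is worth checking (to confirm the bound is sharp and not over-proved) that length $3$ survives: there the terminating block $B_{i+2}$ may have $n_{i+2}\geq 2$, leaving room for $a_{i+2,n_{i+2}}<a_{i+1,1}<a_{i+2,1}$, so no contradiction arises.
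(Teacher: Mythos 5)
Your proof is correct, but note that the paper itself contains no proof of this lemma to compare against: it is stated as a quoted result of Claesson and Gu\dh{}mundsson \cite{MR3940979}, illustrated only by the example $\sigma=52341$, $p_1(\sigma)=2\mid 531\mid 4$. Your argument therefore supplies a self-contained proof where the paper has none, and it is sound. The key chain of observations all check out: each $\widetilde{B_i}$ is strictly increasing, so a maximal decreasing factor $R$ of $p_1(\sigma)$ takes exactly one token from each of a run of consecutive reversed blocks, and it can only pass from $\widetilde{B_j}$ into $\widetilde{B_{j+1}}$ when its token in $\widetilde{B_j}$ is the \emph{last} token of $\widetilde{B_j}$; hence a fourth token forces both $B_{i+1}$ and $B_{i+2}$ to be singletons, and then the descent $a_{i+1,1}>a_{i+2,1}$ inside $R$ contradicts the ascent $a_{i+1,n_{i+1}}=a_{i+1,1}<a_{i+2,1}$ that the maximal block decomposition of $\sigma$ imposes between adjacent blocks. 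Your closing sharpness check is also right, and matches the paper's example: a third token is possible precisely because it may be the minimum $a_{i+2,n_{i+2}}$ of a non-singleton block $B_{i+2}$, which can lie below $a_{i+1,1}$ without violating $a_{i+1,1}<a_{i+2,1}$.
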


\noindent
For example, if  $\sigma=52341=52\mid 3\mid41$ then $p_1(\sigma)=25314=2\mid 531\mid 4$.

\begin{lemma}
	\label{4k_blocks}
	Let $\sigma$ be a permutation with block decomposition  $B_1B_2B_3\dots B_m$, 
	$a\in B_{i+1}, b\in B_{i+n}$ 
	two entries of $\sigma$ with $a>b$, $n \geq1$.  (See for example \cref{fig:long-blocks}). 
   	If $n\geq 3^{k}$ then $\sigma$ is not $k$-pass pop stack sortable.
\end{lemma}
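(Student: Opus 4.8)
The plan is to prove this by induction on $k$, exploiting the defining relation $p_k(\sigma)=p_{k-1}(p_1(\sigma))$, so that $\sigma$ is $k$-pass pop stack sortable if and only if $p_1(\sigma)$ is $(k-1)$-pass sortable. I would track the fixed pair of values $a>b$ throughout. Since $i+1\le i+n$, the value $a$ always occurs before $b$ in $\sigma$, so $(a,b)$ is an inversion, and the quantity $n$ in the statement is exactly the number of blocks from the block containing $a$ up to and including the block containing $b$; call this the \emph{block-separation} of $(a,b)$. I would actually prove the equivalent (and marginally more general) claim: for every $k\ge 0$, if $\sigma$ has such a pair with block-separation $n\ge 3^{k}$, then $p_k(\sigma)$ is not increasing. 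For the base case $k=0$ the hypothesis $n\ge 1$ merely says $a>b$ occur in that order, so $p_0(\sigma)=\sigma$ is not increasing.

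For the inductive step I would pass to $\tau=p_1(\sigma)$ and show that the same pair $(a,b)$ survives as an inversion whose block-separation $n'$ in $\tau$ satisfies $n'\ge n/3\ge 3^{k-1}$; the inductive hypothesis applied to $\tau$ then yields that $p_{k-1}(\tau)=p_k(\sigma)$ is not increasing. Because $p_1(\sigma)=\widetilde{B_1}\cdots\widetilde{B_m}$ reverses each block in place without relocating it, the value $a$ still lies inside $\widetilde{B_{i+1}}$ and $b$ inside $\widetilde{B_{i+n}}$, so $a$ still precedes $b$ and $a>b$. Let $w$ be the factor of $\tau$ running from $a$ to $b$. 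Since $w$ contains at least one token from each of the $n$ blocks $B_{i+1},\dots,B_{i+n}$ (namely $a$, all tokens of the intermediate reversed blocks, and $b$), its length satisfies $|w|\ge n$. On the other hand $w$ meets exactly the blocks $C_p,\dots,C_q$ in the block decomposition of $\tau$, where $a\in C_p$ and $b\in C_q$, and by construction $n'=q-p+1$ is precisely the block-separation of $(a,b)$ in $\tau$. By \cref{lem3} each $C_j$ contains at most $3$ tokens, so $n\le |w|\le\sum_{j=p}^{q}|C_j|\le 3n'$, which gives $n'\ge n/3\ge 3^{k-1}$ as needed.

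The step I expect to require the most care is this counting in the inductive step. One must check both that the factor $w$ between $a$ and $b$ genuinely picks up a token from each of the $n$ original blocks, so that $|w|\ge n$, and that $n'$ — the block-separation of $(a,b)$ measured in $\tau$ — equals the number of $\tau$-blocks that $w$ meets, so that \cref{lem3} may be invoked to bound $|w|$ above by $3n'$. Once the inequality $n'\ge n/3$ is established, the factor of $3$ in the exponent of $3^{k}$ passes cleanly through the induction, and the persistence of the inversion together with the base case are routine. No separate treatment of small $n$ is required, since in the inductive step $n\ge 3^{k}\ge 3$ already forces $a$ and $b$ into distinct reversed blocks of $\tau$.
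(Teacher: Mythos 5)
Your proof is correct and follows essentially the same route as the paper's: induction on $k$, tracking the fixed inversion $(a,b)$ through one pass, and using \cref{lem3} to show the block-separation shrinks by at most a factor of $3$. The only cosmetic difference is that you bound the separation in $p_1(\sigma)$ by counting all tokens in the factor $w$ from $a$ to $b$, whereas the paper marks $a$, $b$ and the intermediate blocks' tokens \textbf{bold} and counts bold blocks; the two countings are interchangeable.
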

\begin{proof}
We proceed by induction. For $k=0$, $\sigma$ is not sortable by $0$ passes since $a>b$.
Assume the statement is true for $k\in\N$, and consider $\sigma= B_1\dots B_{i+1}\dots B_{i+n}\dots B_m$, $a\in B_{i+1}, b\in B_{i+n}$ with $n\geq 3^{k+1}$.  Colour the tokens $a,b$ and all tokens in  $B_{i+2},\dots, B_{i+n-1}$ 
\textbf{bold}. Then the number of bold tokens in $\sigma$ is at least $n\geq 3^{k+1}$ since each block contains at least one token. After one pass, each block of $p_1(\sigma)$ 
can contain at most $3$ tokens, 
so the number of blocks with bold entries  is at least $\frac{3^{k+1}}3=3^k$. Since $p_1(\sigma)= \widetilde{B_1}\dots \widetilde{B_{i+1}}\widetilde{B_{i+2}}\dots \widetilde{B_{i+n-1}}\widetilde{B_{i+n}}\dots \widetilde{B_m}$ and $\widetilde{B_{i+1}},\widetilde{B_{i+n}}$ contain just one bold token each ($a,b$ respectively),  we have that $a$ must be in the first block with bold entries in the block decomposition of $p_1(\sigma)$, and $b$ in the last, so by inductive hypothesis $p_1(\sigma)$  cannot be sorted by $k$ passes. Thus $\sigma$ cannot be sorted by $k+1$ passes.\end{proof}

We remark that the bound of $3^{k}$ in the preceding statement is an extreme overestimate\footnote{for example, if each block had exactly one token, the factor would be an increasing sequence.}, a more careful  argument should prove a linear bound rather than exponential. However for the purpose of this paper any bound suffices.

\section{General characterisation of $k$-pass pop stack sortable permutations }
\label{sec:k-pass pop stack characterisation}

\begin{theorem}\label{thm:main}	Let $k\in \N_+$.
	There exists a pair of finite sets $(F_k, G_k)$  such that the set of all  $k$-pass pop stack sortable permutations is equal to $\Av_2(F_k, G_k)$.  Moreover, the sets $F_k,G_k$ can be algorithmically constructed. 
\end{theorem}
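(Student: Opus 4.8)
The plan is to turn $k$-pass sortability into a statement about patterns of length bounded by a function of $k$. The driving force is \Cref{4k_blocks}: since an inversion spanning $3^{k}$ or more blocks already obstructs $k$-pass sorting, whether one entry ends up before another after $k$ passes ought to be decided inside a bounded neighbourhood. Accordingly I would first isolate the engine as a \emph{localisation lemma}: there is a bound $N=N(k)$, governed by $3^{k}$, such that for every permutation $\tau$ and every pair of entries $a>b$ there is a set $W$ of entries of $\tau$ with $\{a,b\}\subseteq W$ and $|W|\le N$, for which the relative order of $a$ and $b$ in $p_k(\tau')$ is the same for every subpermutation $\tau'$ of $\tau$ whose entries include those of $W$. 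Informally, $W$ is a window holding every entry that can influence whether $a$ precedes $b$ after $k$ passes, so that entries lying outside $W$ are irrelevant to this pair.

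With the localisation lemma in hand I would set $F_k$ to be the reduced permutations of length at most $N$ that are not $k$-pass sortable, and $G_k$ to be the reduced $k$-pass sortable permutations of length at most a second bound $N'\ge N$ that contain some element of $F_k$; both sets are finite. One inclusion is then straightforward. If $\sigma$ is not $k$-pass sortable, pick entries $a>b$ with $a$ before $b$ in $p_k(\sigma)$, let $W$ be the window supplied by the lemma, and set $\gamma=\sigma|_W$. Localisation forces $a$ before $b$ in $p_k(\gamma)$, so $\red(\gamma)\in F_k$; and for every $\delta$ with $\gamma\subperm\delta\subperm\sigma$ the entries of $\delta$ include $W$, so localisation again keeps $a$ before $b$ in $p_k(\delta)$, whence $\delta$ is not sortable and $\red(\delta)\notin G_k$. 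Thus $\gamma$ is not saved and $\sigma$ $2$-contains $(F_k,G_k)$.

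The reverse inclusion carries the real work. Suppose $\sigma$ is $k$-pass sortable and $\gamma\subperm\sigma$ has $\red(\gamma)\in F_k$; I must produce a $k$-sortable $\delta$ with $\gamma\subperm\delta\subperm\sigma$ and $|\delta|\le N'$, giving $\red(\delta)\in G_k$ so that $\gamma$ is saved. This amounts to a \emph{saving lemma}: inside a sortable permutation every bounded subpattern extends to a bounded sortable subpattern. I would prove it by induction on $k$ using $p_k=p_{k-1}\circ p_1$. Since $\sigma$ is $k$-sortable, $p_1(\sigma)$ is $(k-1)$-pass sortable, and the entries of $\gamma$ form a subpattern $\gamma'$ of $p_1(\sigma)$; the inductive hypothesis supplies a bounded $(k-1)$-sortable $\delta_1$ with $\gamma'\subperm\delta_1\subperm p_1(\sigma)$. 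It then remains to lift $\delta_1$ back across one pass: to select a bounded entry set $T$ of $\sigma$ containing the entries of $\gamma$ such that $\sigma|_T$ is $k$-pass sortable, equivalently such that $p_1(\sigma|_T)$ is $(k-1)$-pass sortable. The subtlety is that restriction commutes with a single pass only when $T$ is a union of whole blocks of $\sigma$, while forcing whole blocks in tends to enlarge the image and can destroy its $(k-1)$-sortability. I would resolve this by retaining, inside each block that meets the entries of $\delta_1$, only those entries together with the block's largest and smallest entries (which fix its boundaries with its neighbours), and then verifying that the single-pass image of the trimmed $T$ is still $(k-1)$-sortable. Here \Cref{lem3} keeps the blocks of $p_1(\sigma)$ short and \Cref{4k_blocks} confines the relevant entries to boundedly many blocks of $\sigma$, so that $T$ satisfies a bound $N'=N'(k)$.

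Finally, finiteness and constructibility are immediate: $N$ and $N'$ are explicit functions of $k$; there are finitely many reduced permutations up to these lengths; and $k$-pass sortability of any given permutation is decided by simulating the deterministic pop stack $k$ times. Enumerating all short permutations and testing each therefore produces $F_k$ and $G_k$ algorithmically, after which the redundancy rules of \Cref{lem:removeA,lem:removeB,lem:removeC} may optionally be applied to prune the lists. I expect the main obstacle to be the saving lemma, and within it the one-pass lifting together with the block-trimming bookkeeping needed to keep the witness bounded; the remaining steps are routine consequences of localisation and of the finiteness of the two pattern sets.
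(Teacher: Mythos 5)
Your overall architecture is sound and, in its endpoints, close to the paper's: your $F_k$ and $G_k$ are in form exactly the paper's $\Omega_1$ (short unsortable patterns) and $\Omega_2$ (bounded sortable extensions containing a member of $F_k$), and the reduction of the theorem to your two lemmas is logically valid. The genuine gap is that both lemmas are asserted rather than proved, and they are where all the content lies. The \emph{localisation lemma} in particular does not follow from \cref{4k_blocks}, which you cite as its driving force: that lemma is a one-way implication (an inversion spanning $3^k$ blocks forces unsortability) and says nothing about a bounded window $W$ pinning down the relative order of $a$ and $b$ in $p_k(\tau')$ \emph{uniformly over all} $\tau'$ with $W\subseteq\tau'\subseteq\tau$. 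Proving such a statement requires controlling how the block structure, and hence the pair's fate, changes when entries are inserted or deleted, compounded over $k$ passes --- i.e.\ exactly the non-commutativity of restriction with $p_1$ that makes the whole problem hard. The paper never needs localisation: its forward direction runs an induction on $k$ in which the inductive hypothesis $S_{k-1}=\Av_2(F_{k-1},G_{k-1})$ hands it a concrete unsaved witness $\zeta\subperm p_1(\sigma)$ with $\red(\zeta)\in F_{k-1}$, which is then pulled back into $\sigma$ and trimmed by a deletion subroutine that never merges bold blocks. Your plan has no analogous inductive handle from which localisation could be launched.

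The \emph{saving lemma} sketch also contains a step that would fail as described. Having produced the $(k-1)$-sortable $\delta_1\subperm p_1(\sigma)$, you propose to trim $\sigma$ to a bounded set $T$ and then ``verify that the single-pass image of the trimmed $T$ is still $(k-1)$-sortable.'' But that image sits between $\delta_1$ and $p_1(\sigma)$, and sortability is monotone in neither direction under subpermutations (the paper's motivating example: $3241$ is not $2$-sortable while $41352$, which contains it, is), so being sandwiched between two sortable permutations proves nothing; the verification is an instance of the very problem being solved. Moreover your trimming rule (keep each block meeting $\delta_1$, reduced to its entries plus the block's extremes, and discard the other blocks) can merge blocks: in $5\mid 61\mid 2$, deleting the middle block merges $5$ and $2$ into the single block $52$, after which $p_1$ no longer commutes with the restriction. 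The paper's converse direction is precisely the repair of these two failures: a subroutine that deletes tokens and whole blocks only when \emph{no} blocks (bold or not) merge, \cref{4k_blocks} to bound the chains of undeletable non-bold blocks as in \cref{fig:long-blocks}, giving $|\kappa|\leq(3+2\cdot 3^k)|\gamma|\leq C$, and then the inductive hypothesis once more to transfer an unsaved $F_{k-1}$-witness from $p_1(\kappa)$ to $p_1(\sigma)$, contradicting sortability of $\sigma$. In short, your outline identifies the right objects and even flags the right difficulties, but the two lemmas it rests on are unproven, and the natural attacks you sketch for them run into exactly the non-monotonicity that the paper's induction-on-$k$ machinery exists to overcome.
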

\begin{proof}
	Let $S_k$ denote  the set of all  $k$-pass pop stack sortable permutations.
	We proceed by induction, with the base case $k=1$ established by Avis and Newborn \cite{MR624050} (specifically, $F_1=\{ 231, 312 \}, G_1=\emptyset$).
	Assume $F_{k-1},G_{k-1}$ have been  constructed,  are finite, and $S_{k-1}=\Av_2(F_{k-1}, G_{k-1})$.

	Let 
	$ f_{\max}=\max\{|\beta|\mid \beta\in F_{k-1}\}$ and 
	$C= 3^{k+2}f_{\max}$. 
	Then define  \[\begin{array}{lll}
	\Omega_1  &=&  \{\tau\in S^\infty\mid  |\tau|\leq 3f_{\max}, \tau\not\in S_k\}, \\\Omega_2&=&
	\{\kappa\in S^\infty\mid |\kappa|\leq C,\kappa\in S_k, \exists \tau\in \Omega_1[\tau\containedin \kappa]
	\}.\end{array}\] 
	
	 \noindent
	Claim 1: $\Omega_{i}$ are both finite:  both are subsets of the set of all permutations of length at most $C$.
	\\
	
	\noindent
	Claim 2: $\Omega_{i}$ are algorithmically constructible: 
	we only have finitely many $\tau,\kappa$ of length at most $3f_{\max}, C$ respectively, for each $\tau, \kappa$ we can check  $\tau\not\in S_k$ or $\kappa\in S_k$  in linear time by passing them according to the deterministic procedure, and we can check all subpermutations of $\kappa$ of length at most $3f_{\max}$ to see whether or not at least one has reduced form  in $\Omega_1$. \\

	\noindent
	Claim 3:  $\sigma\not\in S_k$ if and only if  $\sigma$ $2$-contains $(\Omega_1,\Omega_2)$.

	\begin{proof}[Proof of Claim 3]
	We have   $\sigma\not\in S_k$  if and only if  $p_1(\sigma)\not\in S_{k-1}$ if and only if  $p_1(\sigma)$ $2$-contains $(F_{k-1},G_{k-1})$.
		
		To prove the forward direction,
		suppose  $p_1(\sigma)$ $2$-contains $(F_{k-1},G_{k-1})$, and further assume this is because of some $\zeta\subperm p_1(\sigma)$ with $\red(\zeta)\in F_{k-1}$, and there is no 
		 $\delta\subperm \sigma$ with $ \zeta\subperm \delta$ and $\red(\delta)\in G_{k-1}$.
		Note, there may be  many choices of $\zeta$ to take, but  fix  one choice.
		\begin{enumerate}\item 
			Mark tokens corresponding to $\zeta$ in $p_1(\sigma)$  \textbf{bold}.  
			Let $\zeta'\subperm \sigma$ be such that after one pass, the tokens belonging to $\zeta'$ are the bold tokens corresponding to $\zeta\subperm p_1(\sigma)$. Mark the $\zeta'$ tokens bold as well.
			Note that $ |\zeta'|\leq f_{\max}$.

			\medskip
			
			For example, if $\sigma=987354621$ then $p_1(\sigma)=378945126$ which $2$-contains $(\{231, 312\},\emptyset)$ because of (for instance)
			the  subpermutation $\zeta=956\sim 312$ of  $p_1(\sigma)$. We write  $p_1(\sigma)$ as  $378\textbf{9}4\textbf{5}12\textbf{6}$, and thus $\sigma$ as  $\textbf{9}873\textbf{5}4\textbf{6}21$.

			\medskip
			
			\item   
			Next, write  $\sigma$ in block decomposition $\sigma=B_1B_2B_3\dots B_m$ with $B_i=a_{i,1}\dots a_{i,n_i}$. Say that $B_i$ is \textbf{bold} if it contains at least one bold entry (from $\zeta'$).
			We wish to delete non-bold entries of $\sigma$ but we do not want to {\em merge} bold blocks, so we apply the following subroutine. 
			
			\begin{itemize}\item[--]
				set $\tau=\sigma$\item[--]
				while $a_{i,j}$ is a non-bold letter,   \begin{itemize}\item[] if removing $a_{i,j}$ from $\tau$
					does not cause two or more bold blocks to merge, delete $a_{i,j}$ from $\tau$.
					
				\end{itemize}

			\end{itemize}
			
			We claim that at the end of this process $|\tau|\leq 3|\zeta'|\leq 3f_{\max}$. Let $a_{i,j}\in B_i$ with $1<i<m$ be a non-bold token in $\tau$. If at most one of $B_{i-1},B_i,B_{i+1}$ is bold,  then removing $a_{i,j}$ cannot merge bold blocks. Else assume at least two of $B_{i-1},B_i,B_{i+1}$ are bold. If  $a_{i,j}$ is not the first or last entry in $B_i$, it can be deleted without merging blocks. This leaves at most two unbold entries in $B_i$. For $B_1$ (resp. $B_m$)  we  can delete all except the last (resp. first) entry without merging bold blocks. This leaves at most two unbold entries in each block.
			Then in the worst case each block contains just one bold entry, with an unbold entry on either side. For example, if we get to  $\tau=\dots \mid 12, \mathbf{10},  8 \mid 9\mathbf{7}5\mid 6\mathbf{4}2\mid 3\mathbf{1} $ then we cannot delete $8,9, 5,6,2,3$ without merging blocks.
			
			\item
			After this, we obtain a permutation $\tau\subperm \sigma$ such that the bold letters $\zeta'\subperm \tau$ and $|\tau|\leq 3f_{\max}$. 
			
		\end{enumerate}

		We now claim that $p_1(\tau)$ $2$-contains $(F_{k-1},G_{k-1})$ because of the same subpermutation $\zeta$ of $p_1(\sigma)$. Since bold blocks are preserved in $\tau$, we know that $p_1(\tau)$ will also contain $\zeta$.
		Now suppose there is some $ \delta\subperm p_1(\tau)\left(\subperm p_1(\sigma)\right)$ with $\red(\delta) \in G_{k-1}$ and $ \zeta'\subperm \delta$. This means that the same $\delta$ saves $(\zeta,\sigma)$, which contradictions our original assumption. Thus  $p_1(\tau)$ $2$-contains $(F_{k-1},G_{k-1})$ which implies $p_1(\tau)\not\in S_{k-1}$ which implies $\tau\not\in S_k$.
		
		Thus since $|\tau|\leq 3 f_{\max}$ and $\tau\not\in S_k$, we have $\tau\in \Omega_1$ by definition.  To finish this direction, we will show that $\tau$ is not saved by any subpermutation order-isomorphic to something in $\Omega_2$.

		Suppose (for contradiction) that there is some  $\delta\subperm \sigma$ with $\tau\subperm \delta$ and $\red(\delta)\in \Omega_2$.
		This means $\delta\in S_k$ so $p_1(\delta)\in S_{k-1}$ so  $p_1(\delta)$ $2$-avoids $(F_{k-1},G_{k-1})$.

		Now $p_1(\delta)$ will  contain $\zeta$ since blocks containing $\zeta'\subperm \tau\subperm \delta$ will not merge after one pass.
		Since $p_1(\delta)$ $2$-avoids $(F_{k-1},G_{k-1})$ and contains $\zeta$, 
		there must be some   $\alpha\in G_{k-1}$ which saves $\zeta\subperm p_1(\delta)$. 
		This means there is some $\alpha'\subperm p_1(\delta)$ with $\zeta\subperm \alpha'$ and $\alpha'\sim\alpha$.

		We claim $\alpha\in G_{k-1}$ also saves $\zeta\subperm p_1(\sigma)$, since there exists $\alpha'\subperm p_1(\delta)\subperm p_1(\sigma)$ with $\zeta\subperm \alpha'$ and $\alpha'\sim\alpha$. This contradicts that $p_1(\sigma)$ $2$-contains $(F_{k-1},G_{k-1})$ because of $\zeta$.
		Thus we have shown $\sigma\not \in S_k$ implies $\sigma$ $2$-contains $(\Omega_1,\Omega_2)$.

		\bigskip
		Now for the converse direction, 
		suppose that  $\sigma$ $2$-contains $(\Omega_1,\Omega_2)$, and so we can assume that this is because of 
		$\gamma\subperm\sigma$ and $\tau\in\Omega_1$ with $\gamma\sim \tau$
		(which is not saved by any $\alpha\in \Omega_2$), and so by definition $\gamma\not\in S_k$ so  $p_1(\gamma)$ $2$-contains $(F_{k-1},G_{k-1})$. 

		Assume (for contradiction) that $\sigma\in S_k$. We will show that this implies we can construct some $\kappa\subperm\sigma$ such that $\kappa\in S_k, \gamma\subperm \kappa $ and $|\kappa| \leq C$.
		 If so, then we can construct $\alpha\in \Omega_2$ with $\alpha\sim\kappa$ and $\red(\gamma)\containedin \alpha$, which means $\alpha$ saves $\gamma\subperm\sigma$, and this gives a contradiction that $\sigma$ $2$-contains $(\Omega_1,\Omega_2)$ because of $\gamma$.
		
		Here is how we construct $\kappa$. 
		In $\sigma$, mark the tokens corresponding to $\gamma$  \textbf{bold}.

		 Call a block of $\sigma$ bold if it contains at least one bold token, and otherwise a block is called non-bold.
		Starting with  $\kappa=\sigma$, we  delete non-bold tokens using the following procedure, which is more careful than the  subroutine used in the proof of the forward direction above. The goal is to delete non-bold tokens to obtain a permutation $\kappa$ with subpermutation $\gamma$ such that for {\em every} block $B$ in $\kappa$ there is a block $B'$ in $\sigma$ so that  the tokens in $B$ are tokens in $ B'$. That is, we do not allow {\em any} (bold or unbold) blocks to merge, only to be deleted entirely.
		
		\begin{itemize}\item[--] set $\kappa=\sigma$\item[--]
			while $a_{i,j}\in B_i$ is a non-bold letter,   \begin{itemize}\item[-] if removing $a_{i,j}$ from $\kappa$
				does not cause two or more  blocks {\em of any kind} (bold or non-bold) in $\kappa$ to merge, delete $a_{i,j}$ from $\kappa$,
				\item[-] if  $B_i$ is non-bold and removing the entire  block $B_i$ at once does not cause any of the remaining blocks to merge, then delete $B_i$.
			\end{itemize}
		\end{itemize}
		We claim that at the end of this process each block contains at most two non-bold entries, which will be the first and last entries of the block. However, since we have not deleted non-bold blocks if their removal would cause other blocks to merge, we could have arbitrarily long factors of non-bold blocks, as in the example in  Figure~\ref{fig:long-blocks}.

  \begin{figure}[ht]

\begin{center}
      \begin{tikzpicture}[scale = .5, auto = center, inner sep=.3mm]

     \draw[color=gray] (6.5,19) -- (8.5,19) -- (8.5,23) -- (6.5,23) -- (6.5,19);
            \node[circle, fill=black] at (7,22) {.};
              \node[circle, fill=gray] at (8,20) {};
              
                   \draw[color=gray] (8.5,17) -- (10.5,17) -- (10.5,22) -- (8.5,22) -- (8.5,17);
                        \node[circle, fill=gray] at (9,21) {};
              \node[circle, fill=gray] at (10,18) {};

                                 \draw[color=gray] (10.5,15) -- (12.5,15) -- (12.5,20) -- (10.5,20) -- (10.5,15);
                        \node[circle, fill=gray] at (11,19) {};
              \node[circle, fill=gray] at (12,16) {};

                                 \draw[color=gray] (12.5,13) -- (14.5,13) -- (14.5,18) -- (12.5,18) -- (12.5,13);
                        \node[circle, fill=gray] at (13,17) {};
              \node[circle, fill=gray] at (14,14) {};
              
                                 \draw[color=gray] (14.5,11) -- (16.5,11) -- (16.5,16) -- (14.5,16) -- (14.5,11);
                        \node[circle, fill=gray] at (15,15) {};
              \node[circle, fill=gray] at (16,12) {};
                                          
                         \draw[color=gray] (16.5,9) -- (18.5,9) -- (18.5,14) -- (16.5,14) -- (16.5,9);
                        \node[circle, fill=gray] at (17,13) {};
              \node[circle, fill=gray] at (18,10) {};

                                                             \draw[color=gray] (18.5,7) -- (20.5,7) -- (20.5,12) -- (18.5,12) -- (18.5,7);
                        \node[circle, fill=gray] at (19,11) {};
              \node[circle, fill=gray] at (20,8) {};

                                                             \draw[color=gray] (20.5,6) -- (22.5,6) -- (22.5,10) -- (20.5,10) -- (20.5,6);
                        \node[circle, fill=gray] at (21,9) {};
              \node[circle, fill=black] at (22,7) {.};

      \end{tikzpicture}
      \end{center}

\caption{ $\kappa=\mathbf{16}, 14, 15, 12, 13, 10,11, 8 9 6 7 4 5 2 3 \mathbf{1}$ }
\label{fig:long-blocks}
  \end{figure}

		Suppose $\kappa$ has block decomposition $T_1\dots T_m$, and $\kappa$ contains two bold entries $a,b$ with $a\in T_{i+1}$ and $b\in T_{i+n}$ with $T_{i+2}, \dots, T_{i+n-1}$ having no bold entries. (Note that each $T_j$ is a subset of some $B_k$ in the block decomposition of $\sigma$,  by construction.)
		If $a<b$ then the subroutine is not complete: we could delete $T_{i+2}, \dots, T_{i+n-1}$ completely without merging $T_{i+1}$ with $T_{i+n}$ since the last entry of $T_{i+1}$ is smaller than the first entry of $T_{i+n}$.
		Therefore since we assume the subroutine is complete, we know that $a>b$.
If $n\geq 3^k$ then by Lemma~\ref{4k_blocks} $\sigma$ cannot be sorted by $k$ passes, which is a contradiction.
		Thus we have $|\kappa|$ is at most $3|\gamma|$ (the bold blocks with a non-bold entry first and last) plus $2.3^k|\gamma|$ ($3^k$ factors each containing $2$ tokens, as in Figure~\ref{fig:long-blocks}, in the worst case occurring between every pair of bold tokens from $\gamma$). Thus \[|\kappa|\leq (3+2.3^k)|\gamma|\leq (3+2.3^k)3f_{\max}\leq 3^{k+2}f_{\max}=C\]
		
		If $\kappa$ cannot be sorted, then 
		 $p_1(\kappa) \not \in S_{k-1}$ because of some subpermutation $\tau$ with $\red(\tau)\in F_{k-1}$, but  since no block has merged in obtaining $p_1(\kappa)$, $p_1(\sigma)$ also  contains $\tau$ which cannot be saved since blocks containing the tokens forming $\tau$ are fixed. Thus $p_1(\sigma)\not\in S_{k-1}$, so 
		 $\sigma \not \in S_{k}$, contradiction.
			\end{proof}

	Claim 3 implies that we could take $F_k=\Omega_1, G_k=\Omega_2$ and the theorem is done. However, 
	we can first  apply \cref{lem:removeB,lem:removeC}  \footnote{Note that by construction there is no need to apply Lemma~\ref{lem:removeA}.
} to obtain smaller sets with the same $2$-avoidance set, so we will call the result of applying these (in some order) $(F_k,G_k)$. As remarked in Subsection~\ref{subsec:redundant}, the result of applying the lemmas is not guaranteed to give a set that is minimal or unique. Note that each of these lemmas needs to check a finite set so each is algorithmic.\end{proof}

Applying the construction in the proof above to the case $k=2$ yields a much larger pair of sets than those appearing in \cref{thm:2pop} given by Pudwell and Smith\footnote{$f_{\max}=3$ and $k=2$  so $C=3^5=243$.}. The point of our argument is that it is general; our upper bounds can certainly  be lowered, and more lemmas to reduce the size of the sets $F,G$ could be proved to sharpen the result. 
See \cite{GohThesis} for a discussion  of explicit avoidance sets for the case $k=3$.

\section{Outlook}

Our notion of $2$-containment opens up some interesting possibilities. Recall that by  
Kaiser-Klazar \cite[Thm. 3.4]{Klazar} and Marcus-Tardos \cite{MarcusT}
 the function counting the number of permutations of length $n$ in any $\Av(F)$ for $F$ non-empty is either polynomial or exponential.
It is conceivable some pair of (finite or infinite) sets $(F,G)$ could have 2-avoidance set with growth function strictly between polynomial and exponential, or between exponential and factorial. Example~\ref{eg:egFactorial} shows non-trivial 2-avoidance sets with factorial growth.

Generating functions for $2$-avoidance sets might also exhibit  interesting behaviour. 
For the sets $(F_k,G_k)$ in \cref{thm:main}
 we know by \cite{MR3940979} the generating functions 
  are rational for all $k$, but for general sets $F,G$ the set  $\Av_2(F,G)$ could have interesting enumerations.

\bibliographystyle{plain}
\bibliography{refs}

\appendix
\section{Proofs of \cref{lem:removeA,lem:removeB,lem:removeC}}
\label{appendix}

We start with the following observation.
\begin{lemma}\label{lemFirst}
Let $F_1,F,G_1,G\subseteq S^\infty$ with $F_1\subset F, G_1\subset G$. Then 
\begin{itemize}\item[--] $\Av_2(F,G_1)\subseteq \Av_2(F,G)$ and 
\item[--] $\Av_2(F,G)\subseteq \Av_2(F_1,G)$.
\end{itemize}
\end{lemma}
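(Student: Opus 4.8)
The plan is to prove both inclusions directly from Definition~\ref{2avoidance}, exploiting the monotonicity that is already built into the quantifier structure of $2$-avoidance: enlarging the ``saving'' set $G$ can only make it easier for a forbidden subpermutation to be saved, while shrinking the ``forbidden'' set $F$ can only reduce the collection of subpermutations that are required to be saved. No combinatorial input about pop stacks or block decompositions is needed; everything follows by unwinding the definition and tracking the set inclusions.

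For the first inclusion $\Av_2(F,G_1)\subseteq\Av_2(F,G)$, I would fix $\sigma\in\Av_2(F,G_1)$ and let $\gamma\subperm\sigma$ be an arbitrary subpermutation with $\red(\gamma)\in F$. Applying the hypothesis that $\sigma$ $2$-avoids $(F,G_1)$ to this particular $\gamma$ yields a $\delta\subperm\sigma$ with $\gamma\subperm\delta$ and $\red(\delta)\in G_1$. Since $G_1\subset G$, the very same $\delta$ witnesses $\red(\delta)\in G$, so $\gamma$ is saved with respect to $(F,G)$ as well. As $\gamma$ was an arbitrary $F$-pattern of $\sigma$, the defining condition of $\Av_2(F,G)$ holds, giving $\sigma\in\Av_2(F,G)$.

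For the second inclusion $\Av_2(F,G)\subseteq\Av_2(F_1,G)$, I would fix $\sigma\in\Av_2(F,G)$ and let $\gamma\subperm\sigma$ be any subpermutation with $\red(\gamma)\in F_1$. Because $F_1\subset F$, we also have $\red(\gamma)\in F$, so the hypothesis that $\sigma$ $2$-avoids $(F,G)$ applies to $\gamma$ and supplies the required $\delta\subperm\sigma$ with $\gamma\subperm\delta$ and $\red(\delta)\in G$. Hence every $F_1$-pattern of $\sigma$ is saved by $G$, which is exactly the statement $\sigma\in\Av_2(F_1,G)$.

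I do not expect any genuine obstacle: the entire argument is a direct reindexing of Definition~\ref{2avoidance}. The one point that needs care is bookkeeping of \emph{which} coordinate is being enlarged and which is being shrunk, since the two inclusions move in opposite directions (a bigger $G$ grows the avoidance set, whereas a smaller $F$ also grows it). This monotonicity is precisely the form in which \cref{lemFirst} will be invoked when deducing the redundancy lemmas \cref{lem:removeA,lem:removeB,lem:removeC}.
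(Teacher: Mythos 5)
Your proof is correct and takes essentially the same approach as the paper: both are direct unwindings of the definition, the only cosmetic difference being that the paper argues contrapositively via $2$-containment (if $\sigma$ $2$-contains $(F,G)$ then it $2$-contains $(F,G_1)$, etc.) while you argue directly via $2$-avoidance. The logical content is identical.
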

\begin{proof}
If $\sigma$ $2$-contains $(F,G)$ then $\exists \gamma\subperm \sigma$ with $\red(\gamma)\in F$ and no $\gamma\subperm \delta\subperm \sigma$ with $\red(\delta)\in G$, so in particular, no $\gamma\subperm \delta\subperm \sigma$ with $\red(\delta)\in G_1$, so $\sigma$ $2$-contains $(F,G_1)$. 
So $\Av_2(F, G_1)\subseteq \Av_2(F, G)$.

On the other hand, if $\sigma$ $2$-contains $(F_1, G)$ then  $\exists \gamma\subperm \sigma$ with $\red(\gamma)\in F_1$ and no $\gamma\subperm \delta\subperm \sigma$ with $\red(\delta)\in G$, so in particular $\red(\gamma)\in F$, so $\sigma$ $2$-contains 
 $(F,G)$. So $\Av_2(F, G)\subseteq \Av_2(F_1, G)$. 
 \end{proof}

 \lemA*
\begin{proof}
By \cref{lemFirst} it suffices to show $\Av_2(F, G)\subseteq \Av_2(F, G\setminus\{\alpha\})$. 
If $\sigma$ $2$-contains $(F,G\setminus\{\alpha\})$ then $\exists \gamma\subperm \sigma$ with $\red(\gamma)\in F$ and no $\gamma\subperm \delta\subperm \sigma$ with $\red(\delta)\in G\setminus\{\alpha\}$, and since $\gamma\not\containedin \alpha$ then $\red(\delta)\neq \alpha$ either, so we have that there can be no $\gamma\subperm \delta\subperm \sigma$ with $\red(\delta)\in G$, so $\sigma$ $2$-contains $(F,G)$, and so   $\Av_2(F, G)\subseteq \Av_2(F, G\setminus\{\alpha\})$. 
 \end{proof}

\lemB*
\begin{proof}
By \cref{lemFirst} it suffices to show $\Av_2(F, G)\subseteq \Av_2(F, G\setminus\{\alpha\})$. 
If $\sigma$ $2$-contains $(F,G\setminus\{\alpha\})$ then $\exists \gamma\subperm \sigma$ with $\red(\gamma)\in F$ and no $\gamma\subperm \delta\subperm \sigma$ with $\red(\delta)\in G\setminus\{\alpha\}$. If $\gamma\subperm \delta'\subperm \sigma$ with $\red(\delta')=\alpha$, then $\gamma\containedin \alpha$ which implies $\gamma\containedin \beta$, which contradicts that no $\delta$ with $\red(\delta)=\beta\in G\setminus\{\alpha\}$ can exist with $\gamma\subperm \delta\subperm \delta'\subperm \sigma$. Thus $\sigma$ $2$-contains $(F,G)$, and so   $\Av_2(F, G)\subseteq \Av_2(F, G\setminus\{\alpha\})$. 
 \end{proof}
 
 \lemC*
\begin{proof}
By \cref{lemFirst} it suffices to show $\Av_2(F\setminus\{\lambda\}, G) \subseteq\Av_2(F, G)$. 
If $\sigma$ $2$-contains $(F,G)$ then $\exists \gamma\subperm \sigma$ with $\red(\gamma)\in F$ and no $\gamma\subperm \delta\subperm \sigma$ with $\red(\delta)\in G$.  If $\red(\gamma)=\lambda$, then $\exists \gamma'\subperm \sigma$ with $\red(\gamma')=\kappa$ since $\kappa\containedin \lambda$. Since $\kappa\not\containedin\alpha$ for all $\alpha\in G$, then there is  no $\delta\subperm \sigma$ with  $\gamma'\subperm \delta$ and $\red(\delta)\in G$. 
Thus $\sigma$ $2$-contains $(F\setminus\{\lambda\},G)$, and so   $\Av_2(F\setminus\{\lambda\}, G) \subseteq\Av_2(F, G)$. 
 \end{proof}

\end{document}